\documentclass[a4paper]{amsart}
\usepackage[backend=biber, isbn=false, doi=false, url=false, style=numeric, sortcites=true, citestyle=numeric, giveninits=true]{biblatex}
\bibliography{../Style/references.bib}

\usepackage[utf8]{inputenc}
\usepackage[english]{babel}
\usepackage{amsthm}
\usepackage{amsmath}
\usepackage{amsfonts}
\usepackage{mathtools}
\usepackage{enumitem}
\usepackage{xcolor}
\usepackage{amssymb}
\usepackage{csquotes}
\usepackage{stmaryrd}
\usepackage[capitalise, noabbrev]{cleveref}
\usepackage{cancel}
\setlist[enumerate,1]{label = (\roman*)}

\theoremstyle{definition}
\newtheorem{definition}{Definition}[section]
\newtheorem{proposition}[definition]{Proposition}
\newtheorem{theorem}[definition]{Theorem}
\newtheorem{lemma}[definition]{Lemma}
\newtheorem{corollary}[definition]{Corollary}
\newtheorem{example}[definition]{Example}
\newtheorem*{main-theorem}{Theorem}
\newtheorem*{fact}{Fact}
\theoremstyle{remark}

\renewcommand{\restriction}{ {\upharpoonright} }

\newcommand{\M}{\mathcal{M}}
\newcommand{\N}{\mathcal{N}}
\newcommand{\U}{\mathcal{U}}
\newcommand{\F}{\mathcal{F}}
\renewcommand{\S}{\mathcal{S}}
\renewcommand{\P}{\mathcal{P}}
\newcommand{\Q}{\mathcal{Q}}
\DeclareMathOperator{\dep}{=}

\DeclareMathOperator{\Pow}{\wp}
\DeclareMathOperator{\dom}{dom}

\DeclareMathOperator{\Fv}{Fv}

%%% logic operators
\newcommand\ivee{\mathrel{\rotatebox[origin=c]{270}{$\geqslant$}}}

\newcommand{\existsone}{\exists^1}
\newcommand{\forallone}{\forall^1}
\newcommand{\svee}{\vee_{\mathrm{s}}}
\newcommand{\sexists}{\exists_{\mathrm{s}}}
\newcommand{\unsim}{\mathord{\sim}}
\newcommand{\cneg}{\mathord{\dot{\sim}}}

%%% logics, languages and signatures
\newcommand\fol{\mathsf{FO}}
\newcommand\indl{\mathsf{FO}( \perp_c)}
\newcommand\eso{\mathsf{ESO}}

\newcommand\fodl{\mathsf{FO}(\dep(\dots))}
\newcommand\foil{\mathsf{FO}(\perp_c)}

\newcommand\nmodels{\mathbin{\cancel{\models}}}

\title[Compactness in Team Semantics]{Compactness in Team Semantics}
\date{\today}
\author{Joni Puljujärvi}
\author{Davide Emilio Quadrellaro}
\email{joni.puljujarvi@helsinki.fi}
\email{davide.quadrellaro@helsinki.fi}
\address{Department of Mathematics and Statistics, University of Helsinki, P.O. Box 68 (Pietari Kalmin katu 5), 00014 Helsinki, Finland}
\thanks{J.~Puljujärvi was partially supported by the Academy of Finland, grant 322795. D. E. Quadrellaro was supported by grant 336283 of the Academy of Finland and Research Funds of the University of Helsinki.}
\keywords{team semantics, compactness,  dependence logic, ultraproducts}
\subjclass[2020]{03B60, 03C20, 03C85}

\begin{document}
\maketitle

\begin{abstract}
    We provide two proofs of the compactness theorem for extensions of first-order logic based on team semantics. First, we build upon Lück's~\cite{luck2020team} ultraproduct construction for team semantics and prove a suitable version of Łoś' Theorem. Second, we show that by working with suitably saturated models, we can generalize the proof of Kontinen and Yang~\cite{https://doi.org/10.48550/arxiv.1904.08695} to sets of formulas with arbitrarily many variables.
\end{abstract}

\section{Introduction}

In this article we provide two alternative proofs of a general compactness theorem for in\-de\-pend\-ence logic and related logics based on team semantics. In particular, we prove the following claim:

\begin{main-theorem}\label{main.theorem}
    Let $\Gamma$ be a set of formulas of independence logic. If every finite subset $\Gamma_0$ of $\Gamma$ is satisfiable, then $\Gamma$ is satisfiable.
\end{main-theorem}

Team semantics is a general semantic framework, originally introduced by \textcite{Hodges}, which allows to consider several logics extending first-order logic. In particular,  \textcite{Vaananen2007-VNNDLA} used team semantics to provide a new approach to independence-friendly logic, called \emph{dependence logic}. This was later extended to \textit{inclusion} and \textit{independence} logic, which were respectively introduced in \cite{galliani2012inclusion} and \cite{gradel2013dependence}. The interest in these logics arises from the fact that they are capable of expressing dependencies between variables. The Armstrong's axioms of functional dependencies are, for instance, sound in dependence logic.

Already in \cite{Vaananen2007-VNNDLA}, Väänänen proved several metatheoretical properties of dependence logic, and in particular he gave a translation from dependence logic to the existential fragment $\eso$ of second order logic. This translation was  later adapted in \cite{galliani2012inclusion} and \cite{gradel2013dependence} also to inclusion and independence logic, the latter of which proved to be equivalent to $\eso$. Using this translation, and the compactness of $\eso$, \textcite{Vaananen2007-VNNDLA} proved a version of the compactness theorem for sentences of dependence logic, and the same technique shows compactness for sentences of inclusion and independence logics. However, unlike in classical first-order logic, in logics over team semantics compactness for formulas does not follow straightforwardly from compactness for sentences. In fact, in the context of team semantics, variables represent a vector of values, and their semantic value cannot be mimicked by constants. Compactness for formulas has been considered only recently by \textcite{https://doi.org/10.48550/arxiv.1904.08695}, who proved a version of compactness for sets of formulas with countably many free variables.
\noindent 

The structure of the present article is the following. In \cref{preliminaries}, we recall the basic definition and terminology of dependence and independence logic and we introduce the problem of compactness for formulas in greater details.

In \cref{compactness-by-ultraproducts}, we provide a proof of Łoś' Theorem for several extensions of first-order logic by team semantics, resulting in a compactness proof which resembles the standard proof of compactness of first-order logic. Following \textcite{luck2020team}, we adapt the ultraproduct construction to the setting of team semantics and extend his proof of a suitable version of Łoś' Theorem for rather weak team-based logics to cover, among others, independence logic both with lax and strict semantics.

Differently, in \cref{compactness-by-translation} we show that, by working with suitably saturated models, the underlying idea of Kontinen and Yang's proof can be adapted to sets of formulas with arbitrarily many variables. The key ingredients of this second proof are thus the translation of formulas of independence logic into the existential fragment of second order logic and the use of models which are suitably saturated in the first-order sense and whose existence is guaranteed by basic model-theoretic results. We conclude the paper with a few suggestions of further research.

We would like to thank Tapani Hyttinen for suggesting the approach we take in \cref{compactness-by-translation} and for several useful remarks. We also thank Aleksi Anttila, Åsa Hirvonen, Jouko Väänänen and Fan Yang for useful discussions and pointers to the literature.

\section{Logics over Team Semantics}\label{preliminaries}

In this section, we recall the syntax, semantics and basic properties of (in)de\-pend\-ence logic and related formalisms. We refer the reader to \textcite{Vaananen2007-VNNDLA,galliani2012inclusion,gradel2013dependence} for a general introduction to logics based on team semantics.

\subsection{First-Order Logic in Team Semantics.}

The logics which we shall be dealing with are all extensions of first-order logics (which we will denote by $\fol$) by means of team semantics. As negation has a subtle role in team semantics, we shall work with first-order formulas in negation normal form and treat negated atomic formulas similarly to atomic formulas. Let $\tau$ be a first-order signature. Then $\tau$-formulas of $\fol$ are given by the syntax
\[
	\phi \Coloneqq t=t'  \mid \neg t=t' \mid R(\vec{t}) \mid \neg R(\vec{t}) \mid  \phi \land \phi \mid\phi\vee\phi  \mid \exists x \phi \mid \forall x \phi,	
\]
where $\vec{t}=(t_i)_{i< n},t,t'$ are $\tau$-terms and $R\in\tau$ is an $n$-ary relation symbol. 

We use teams to provide first-order formulas with a semantics. Let $D$ be a set of variables, and $\M$ a $\tau$-structure. An \emph{assignment} of $\M$ is a map $s\colon D\to \M$, while a \emph{team} of $\M$ is a set of assignments $X\subseteq \M^D$. The set $D$ is called the \emph{domain} of $X$ and denoted by $\dom(X)$. The interpretation of a $\tau$-term $t$ in a structure $\M$ with assignment $s$ is defined as usual, and, abusing notation, we denote the interpretation by $s(t)$. If $\vec{t} = (t_0,\dots,t_{n-1})$ is a tuple of terms, we denote by $s(\vec{t})$ the tuple $(s(t_0),\dots,s(t_{n-1}))$. The interpretation of a quantifier-free first-order formula in a structure $\M$ and team $X$ is defined as follows.
\begin{enumerate}
    \item $\mathcal{M}\models_X t=t'$ if $s(t)=s(t')$ for all $s\in X$.
    \item $\mathcal{M}\models_X \neg t=t'$ if $s(t)\neq s(t')$ for all $s\in X$.
    \item $\mathcal{M}\models_X R(\vec{t})$ if $s(\vec{t})\in R^{\M}$ for all $s\in X$.
    \item $\mathcal{M}\models_X \neg R(\vec{t})$ if $s(\vec{t})\notin R^{\M}$ for all $s\in X$.
    \item $\mathcal{M}\models_X \psi \land \chi$ if $\mathcal{M}\models_X \psi$ and $\mathcal{M}\models_X \chi$.
    \item $\mathcal{M}\models_X \psi \vee \chi$ if there are $Y,Z\subseteq X$ such that $ Y\cup Z = X$,  $\mathcal{M}\models_Y \psi$ and $\mathcal{M}\models_Z \chi$.
\end{enumerate}
Disjunction with the above semantics is often called the \emph{tensor disjunction}, as opposed to, for instance, intuitionistic disjunction (see below).

Interpreting quantifiers requires us to introduce two operations on teams, \emph{supplementation} and \emph{duplication}. Let $\M$ be a $\tau$-structure and $s$ an assignment of $\M$. Then, given a variable $x$ and an element $a\in\M$, we denote by $s(a/x)$ the assignment $s'$ such that $s'(x) = a$ and $s'(y) = s(y)$ for all $y\neq x$. Let then $X$ be a team of $\M$, and denote $\Pow^+(\M)\coloneqq \wp(\M)\setminus\{\emptyset\}$. Given a function $F\colon X\to \Pow^+(\M)$, we call
\[
    X(F/x) \coloneqq \{ s(a/x) \mid \text{$s\in X$ and $a\in F(s)$} \} 
\]
the \emph{supplemented} team of $X$ over $x$ by $F$, and the team
\[
    X(M/x) \coloneqq \{ s(a/x) \mid \text{$s\in X$ and $a\in \M$} \}
\]
the \emph{duplicate} team of $X$ over $x$. The interpretation of existential and universal quantifiers is then defined as follows.
\begin{enumerate}[resume]
    \item $\mathcal{M}\models_X \exists x \psi$ if there is a function $F\colon X\to \wp^+(\M)$ such that $ \mathcal{M}\models_{X(F/x)} \psi$.
    \item $\mathcal{M}\models_X\forall x \psi $ if $  \mathcal{M}\models_{X(M/x)} \psi$. 
\end{enumerate}

We remark that the semantics that we obtain in this way is equivalent to the usual Tarski semantics of first-order logic (or,  that first-order logic is \emph{flat}; see below) in the following sense: for any $\fol$-formula $\phi$, any model $\M$ and any team $X$, we have that $\M\models_X \phi$ if and only if $\M\models_s \phi$ for all assignments $s\in X$.

\subsection{Dependency Atoms.}

Despite being conservative over first-order formulas, team semantics allows us for a greater expressive power. In particular, it makes possible to add to the basic vocabulary of first-order logic a set of different dependency atoms, with the purpose of expressing several ways in which variables relate to each other. We shall be interested in the \emph{dependence atom} $\dep(\dots)$, the \emph{independence atom} $\perp_c$, the \emph{inclusion atom} $\subseteq$ and the \emph{exclusion atom} $|$, for their role in expressing dependencies over databases (e.g. functional dependency, inclusion dependency and embedded multivalued dependency). We redirect the reader to \cite{fagin1984theory} for a survey. Next we give the semantics of these atoms.

\begin{enumerate}[resume]
    \item $\mathcal{M}\models_X \dep(\vec{x},y)$ if for all $s,s'\in X$, if $s(\vec{x})=s'(\vec{x})$  then $s(y)=s'(y)$.
    \item $\mathcal{M}\models_X \vec{x}\subseteq \vec{y}$, where $\vec{x}$ and $\vec{y}$ are tuples of the same length, if for all $s\in X$ there is some  $s'\in X$ such that $s(\vec{x})=s'(y)$.
    \item $\mathcal{M}\models_X \vec{x}\perp_{\vec{z}}\vec{y}$ if for all $s,s'\in X$ such that $ s(\vec{z})=s'(\vec{z})$, there is $s''\in X$ such that $ s(\vec{x}\vec{z})=s''(\vec{x}\vec{z})$ and  $s'(\vec{y})=s''(\vec{y})$;
    \item $\mathcal{M}\models_X \vec{x}\mid\vec{y}$, where $\vec{x}$ and $\vec{y}$ are tuples of the same length, if for all $s,s'\in X$ we have $s(\vec{x})\neq s'(\vec{y})$.
\end{enumerate}
Note that we do not need to allow terms to occur in the above atoms, as an atom of the form $\dep(t_0,\dots,t_n)$ is equivalent to $\exists x_0\dots \exists x_n\; (\dep(x_0,\dots,x_n) \land \bigwedge_{i\leq n}x_i=t_i)$.

For any set of atoms $C\subseteq \{ \dep(\dots), \perp_c, \subseteq, | \}$, we write $\fol(C)$ for the extension of first-order logic obtained by closing the first-order atomic and negated atomic formulas and the atomic formulas using symbols of $C$ under conjunction, disjunction and existential and universal quantifiers (but not negation; negation only occurs in front of first-order atoms). In particular, we refer to $\fol(\dep(\dots))$ as \emph{dependence logic}, to $\fol(|)$ as \emph{exclusion logic}, to $\fol(\subseteq)$ as \emph{inclusion logic} and to $\fol(\perp_c)$ as \emph{independence logic}. More precisely, the syntax of e.g. independence logic is
\[
	\phi \Coloneqq t=t'  \mid \neg t=t' \mid R(\vec{t}) \mid \neg R(\vec{t}) \mid \vec{x}\perp_{\vec{z}}\vec{y} \mid  \phi \land \phi \mid\phi\vee\phi  \mid \exists x \phi \mid \forall x \phi.
\]

\noindent Notice that the previous list of dependence atoms is by no means exhaustive. In particular, a notion of generalized dependency atoms was introduced in \cite{kuusisto2015double} and further studied e.g. in \cite{galliani2016strongly}.

\subsection{Strict Semantics and Further Operations.}

Besides for atomic dependencies, team semantics also makes it possible to express a wide range of different connectives and operations. One source of variation with respect to the framework that we introduced above is that of so called \emph{strict semantics} (as opposed to the \emph{lax semantics}). This refers to the following alternative semantics for disjunction and existential quantification.
\begin{enumerate}[resume]
    \item $\mathcal{M}\models_X \psi \svee \chi $ if there are $Y,Z\subseteq X$  such that $ Y\cap Z=\emptyset$, $ Y\cup Z=X$,  $\mathcal{M}\models_Y \psi$  and $\mathcal{M}\models_Z \chi$.
    \item $\mathcal{M}\models_X \sexists x \psi$ if there is a supplement function $F\colon X\to \Pow^+(\M)$ such that $\mathcal{M}\models_{X(F/x)} \psi$ and,  for all $s\in X$, $F(s)=\{a\}$ for some $a\in \M$.
\end{enumerate}
We refer the reader to \cite{galliani2012inclusion} for several results on logic with strict semantics.

Some further connectives often considered in team semantics are the following ones.
\begin{enumerate}[resume]
    \item $\mathcal{M}\models_X \phi \ivee \psi$ if $\mathcal{M}\models_X \phi$ or $\mathcal{M}\models_X \psi$;
    \item $\mathcal{M}\models_X \phi \to \psi$ if for all $Y\subseteq X$, $\mathcal{M}\models_X \phi $ entails $\mathcal{M}\models_X \psi $;
    \item $\mathcal{M}\models_X \cneg \phi$ if $X=\emptyset$ or $\mathcal{M}\nmodels_X \phi $;
    \item $\mathcal{M}\models_X  \unsim\phi$ if $\mathcal{M}\nmodels_X \phi$.
\end{enumerate}

\noindent The operations $\ivee$ and $\to$ are respectively called \emph{intuitionistic disjunction} and \emph{intuitionistic implication} and were originally considered in \cite{zbMATH05593776}. The operations $\cneg$ and $\unsim$ are respectively called \emph{weak classical negation} and simply \emph{classical negation}, and were respectively introduced in \cite{YANG20191128} and \cite{Vaananen2007-VNNDLA}.

Finally, one further source of variation can be given by the quantifier clauses, as one can consider the quantifiers $\existsone$ and $\forallone$ from \cite{kontinen2009definability} whose semantics is the following.
\begin{enumerate}[resume]
    \item $\mathcal{M}\models_X \existsone x \psi$ if there is an element $a\in \M$ such that $ \mathcal{M}\models_{X(a/x)} \psi$;
    \item $\mathcal{M}\models_X \forallone x \psi$ if for any element $a\in \M$ we have $ \mathcal{M}\models_{X(a/x)} \psi$.
\end{enumerate}
where we denote by $X(a/x)$ the set $\{s(a/x) \mid s\in X\} $. 

We abide to the convention set above and we write $\fol(C)$ for any extension of first-order logic by a set of dependencies, connectives and quantifiers $C$. Of course, the list of operations that we provided is by no means exhaustive. As flexibility is one key feature of team semantics, several other choices are possible. We shall focus here on the present ones as they are the most studied in the literature. It will often be clear how to extend the methods of this article to other contexts.

\subsection{Basic Properties and Translation into $\eso$.}

We shall briefly recall some basic properties and facts about logics over team semantics.  We refer the reader to \cite{galliani2012inclusion,gradel2013dependence, Vaananen2007-VNNDLA} for the proofs. We focus on formulas of independence logic and related fragments as they are the most common ones.

If $V\subseteq\dom(X)$, we denote by $X\restriction V$ the team $\{s\restriction V \mid s\in X\}$.

\begin{fact}  The following hold.
	\begin{enumerate}
	    \item \emph{Locality}: For any model $\M$, any team $X\subseteq \M^D$ and any formula $\phi$ of $\fol( \dep(\dots), \perp_c, \subseteq, | )$, we have $\M\vDash_{X} \phi$ if and only if $\M\vDash_{X\restriction \Fv(\phi)} \phi$.
		\item \emph{Empty team property}: For any model $\M$ and any formula $\phi$ of $\fol( \dep(\dots), \perp_c, \subseteq, | )$ we have $\M\vDash_{\emptyset} \phi$.
		\item \emph{Downwards closure}: For any model $\M$, any team $X\subseteq Y \subseteq \M^D$ and any formula $\phi$ of $\fol( \dep(\dots), | )$, if $\M\vDash_{Y} \phi$, then $\M\vDash_{X} \phi$.
		\item \emph{Union-closure}: For any model $\M$, any $X, Y \subseteq \M^D$ and any formula $\phi$ of $\fol(\subseteq )$, if $\M\vDash_{X} \phi$ and $\M\vDash_{Y} \phi$, then $\M\vDash_{X\cup Y} \phi$.
		\item \emph{Flatness}: For any model $\M$, any team $X \subseteq \M^D$ and any formula $\phi$ of $\fol$, we have $\M\vDash_{X} \phi$ if and only if $\M\vDash_s \phi$ for all $s\in X$.
	\end{enumerate}    
\end{fact}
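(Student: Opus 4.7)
The plan is to prove all five properties by a uniform induction on the complexity of $\phi$, where the only item-specific content is which operations are admitted in the syntax. In each case the atomic cases (including the dependency atoms relevant to the item) are handled by direct inspection of the semantic clauses, and the connective and quantifier cases reduce to combinatorial manipulations of the witnessing subteams and supplement functions.

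For \emph{locality}, the induction hypothesis is that for every proper subformula $\psi$ and every team $X$, the value of $\M\models_X\psi$ depends only on $X\restriction\Fv(\psi)$. For atomic and negated atomic first-order formulas and for the dependency atoms, this is immediate because the semantic clause only mentions $s(\vec{t})$ for terms $\vec{t}$ built from free variables. For $\psi\vee\chi$, given a splitting $X=Y\cup Z$ one passes to $Y\restriction \Fv(\phi)$ and $Z\restriction \Fv(\phi)$ and uses the induction hypothesis; conversely a splitting of $X\restriction \Fv(\phi)$ lifts back to $X$ by taking preimages under the restriction map. For $\exists x\,\psi$ and $\forall x\,\psi$, the supplement/duplication operations commute with restriction to $\Fv(\phi) = \Fv(\psi)\setminus\{x\}$ up to a bijection on the relevant coordinates, and one translates supplement functions accordingly.

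The \emph{empty team} property is a trivial induction: every atomic clause is a universally quantified statement over $s\in X$ and thus vacuously true for $X=\emptyset$, while disjunction is witnessed by $Y=Z=\emptyset$ and existential by the vacuous $F\colon\emptyset\to\Pow^+(\M)$; the duplication of the empty team is empty, covering universal quantification. \emph{Downwards closure} for $\fol(\dep(\dots),|)$ is equally direct at the atoms (every defining clause has the form ``for all tuples from $X$\dots''), while for $\psi\vee\chi$ a splitting $Y\cup Z = X'$ with $X'\supseteq X$ is intersected with $X$, and for $\exists x\,\psi$ the supplement $F$ witnessing satisfiability on the larger team is restricted to $X$; for universal quantification one uses that $X(M/x)\subseteq X'(M/x)$ whenever $X\subseteq X'$. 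Dually, \emph{union-closure} for $\fol(\subseteq)$ follows at the inclusion atom from the fact that witnesses for $s\in X\cup Y$ are already present in one of $X,Y$, while at $\psi\vee\chi$ one takes the union of the two splittings, and at $\exists x\,\psi$ one glues the supplement functions together (extending them arbitrarily when their domains overlap, which requires $\Pow^+(\M)$ to be closed under binary union).

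Finally, \emph{flatness} is the only place where a nontrivial witness has to be constructed. The atomic cases are again immediate from the clauses, and conjunction is routine. For $\psi\vee\chi$, if $\M\models_s \psi\vee\chi$ for every $s\in X$ then one sets $Y=\{s\in X : \M\models_s\psi\}$ and $Z=X\setminus Y$ and applies the induction hypothesis pointwise; the converse follows by downward closure to singletons. For $\exists x\,\psi$, given that each singleton $\{s\}$ admits some $a_s\in\M$ with $\M\models_{\{s(a_s/x)\}}\psi$, set $F(s)=\{a_s\}$ and invoke the induction hypothesis on $X(F/x)$; universal quantification is handled analogously via $X(M/x)$. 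The main subtlety I expect is the splitting step in flatness and locality, where one must check that the natural pointwise construction of witnessing subteams is compatible with the team-semantic clauses for the non-flat operators that are \emph{not} present in the relevant fragment, which is precisely why each clause of the fact is stated for a specific sublogic.
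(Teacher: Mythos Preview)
The paper does not give its own proof of this statement: it is recorded as a \emph{Fact} with the comment ``We refer the reader to \cite{galliani2012inclusion,gradel2013dependence,Vaananen2007-VNNDLA} for the proofs.'' Your proposal is precisely the standard structural induction that those references carry out, and the sketch is correct in all five items; in particular, the delicate points you flag (lifting splittings along the restriction map for locality, gluing supplement functions via unions for union-closure, and the pointwise choice of witnesses for flatness) are exactly where the work lies and are handled correctly. One small remark: in the locality case for $\exists x\,\psi$ you should make explicit that the induced supplement $G$ on $X\restriction\Fv(\phi)$ is given by $G(t)=\bigcup\{F(s)\mid s\in X,\ s\restriction\Fv(\phi)=t\}$, so that $(X\restriction\Fv(\phi))(G/x)=X(F/x)\restriction(\Fv(\phi)\cup\{x\})$; this is the step where lax semantics (values in $\Pow^+(\M)$ rather than singletons) is genuinely used, and it is the reason locality can fail under strict semantics, as the paper notes just after the Fact.
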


\noindent It is easy to show that flatness is equivalent to the combination of the empty team property, union-closure and downwards closure, thus showing the connection between the former properties.

We notice in particular that locality does not hold if one replaces disjunction and existential quantification by those defined via strict semantics. Given the empty team property, it is important that when we consider whether a formula, or a set of formulas, is satisfiable, we require the satisfying team to be nonempty. Allowing the empty team would render every formula satisfiable in most logics we are interested in.

\begin{definition}[Satisfiability]
    We say that a set $\Gamma$ of formulas is satisfiable if there is a structure $\M$ and a nonempty team $X$ of $\M$ such that $\M\models_X\Gamma$.
\end{definition}

The expressive power of logics over team semantics ranges from that of first order logic to full second-order logic. Before providing a precise characterisation of the expressive power of independence logic, we shall first recall several definitions related to team properties.

If $X$ is a team of $\M$ and $x_0,\dots,x_{n-1}\in\dom(X)$, we denote by $X[x_0,\dots,x_{n-1}]$ the relation $\{(s(x_0),\dots,s(x_{n-1})) \mid s\in X\}$.

\begin{definition}[Team Property] \label{team.property}\quad
    \begin{enumerate}
        \item We say that a class $\P$ of pairs $(\M,X)$, where $\M$ is a structure and $X$ is a team of $\M$, is a \emph{team property} with domain $D$ if
        \begin{enumerate}
            \item $(\M,X)\in\P \implies D\subseteq\dom(X)$ and
            \item $\P$ is closed under isomorphism, i.e. if $(\M, X)\in\P$ and $\pi\colon\M\to\N$ is an isomorphism, then $(\N,\pi(X))\in\P$, where $\pi(X) = \{ \pi\circ s \mid s\in X \}$.
        \end{enumerate}

        \item A team property $\P$ with domain $D$ is \emph{local} if for all structures $\M$ and teams $X$ of $\M$ with $D\subseteq\dom(X)$ we have
        \[
            (\M,X)\in\P \iff (\M,X\restriction D)\in\P.
        \]
        
        \item A team property $\P$ has the \emph{empty team property} if for all structures $\M$, $(\M,\emptyset)\in\P $.
        
        \item A team property $\P$ is \emph{downwards closed} if for all structures $\M$ and teams $Y\subseteq X$ of $\M$,
        \[
            (\M,X)\in\P \implies (\M,Y) \in\P.
        \]
        
        \item A team property $\P$ is \emph{union-closed} if for all structures $\M$ and teams $X,Y$ of $\M$,
        \[
            (\M,X)\in\P\ \text{and}\ (\M,Y)\in\P \implies (\M,X\cup Y) \in\P.
        \]
        
        \item A team property $\P$ is \emph{flat} if for all structures $\M$ and teams $X$ of $\M$,
        \[
            (\M,X)\in\P \iff \text{$(\M,\{s\})\in\P$ for all $s\in X$.}
        \]

        \item A team property $\P$ with domain $\{x_0,\dots,x_{n-1}\}$ is \emph{first-order} (or \emph{elementary}) if there is a first-order $\tau_\P\cup\{R\}$-sentence $\phi(R)$ such that
        \[
            \P = \{(\M,X) \mid (\M, X[x_0,\dots,x_{n-1}])\models\phi(R)\},
        \]
        where $\tau_\P = \bigcap_{(\M,X)\in\P}\tau_\M$ and $\tau_\M$ is the vocabulary of $\M$.
        
        \item A team property $\P$ with domain $\{x_0,\dots,x_{n-1}\}$ is \emph{existential second-order} if there is an existential second-order $\tau_\P\cup\{R\}$-sentence $\phi(R)$, such that
        \[
            \P = \{(\M,X) \mid (\M, X[x_0,\dots,x_{n-1}])\models\phi(R)\}.
        \]
    \end{enumerate}
\end{definition}
A formula $\phi$ of a logic $\mathcal{L}$ based on team semantics gives rise to a team property $\llbracket\phi\rrbracket_{\mathcal{L}}$, whose domain consists of the free variables of $\phi$, in the obvious way:
\[
    (\M,X)\in\llbracket\phi\rrbracket_{\mathcal{L}} \iff \M\models_X\phi.
\]

The notion of a team property allows us to characterise the expressive power of logics over team semantics. Most specifically, the logics which we are most interested in, i.e. dependence logic, inclusion logic and independence logic, can be all seen as fragments of the existential fragment of second order logic ($\eso$). In particular, it was shown in \cite{gradel2013dependence, galliani2012inclusion} that independence logic is expressively complete with respect to all team properties which can be defined in $\eso$.  The following theorem makes this statement more precise. 

\begin{theorem}[Translation to $\eso$] \label{translation} Independence logic is expressively equivalent to existential second-order logic in the following sense.
    	\begin{enumerate}
            \item \label{translation1}  Let $\phi(v_0,\dots,v_{n-1})$ be a $\tau$-formula of independence logic, then there is a $\tau\cup\{R\}$-formula $\chi(R)$ of $\eso$, where $R$ is a fresh $n$-ary predicate symbol, such that
            \[
            \M\models_X \phi(v_0,\dots,v_{n-1}) \iff (\M,X[\vec{v}])\models \chi(R).
            \]
            \item Let $\chi(R)$ be an $\tau\cup \{R\}$-sentence of $\eso$, there is a $\tau$-formula $\phi(v_0,\dots,v_{n-1})$ of independence logic such that
            \[
            \M\models_X  \phi \iff  (\M, X[\vec{v}])\models \exists x R(x)\to \chi(R).
            \]
    	\end{enumerate}
\end{theorem}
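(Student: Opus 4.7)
The plan is to prove the two directions separately, by different strategies.

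For direction (i), I would proceed by structural induction on the independence logic formula $\phi$, producing for each subformula $\psi(\vec{w})$ an $\eso$-formula $\chi_\psi(R)$ with $R$ a $|\vec{w}|$-ary predicate, such that $\M \models_X \psi$ if and only if $(\M, X[\vec{w}]) \models \chi_\psi(R)$. The atomic first-order cases reduce to a universally quantified first-order sentence of the form $\forall \vec{w}\, (R(\vec{w}) \to \psi(\vec{w}))$, and the independence atom $\vec{x} \perp_{\vec{z}} \vec{y}$ translates directly into a first-order $\forall\forall\exists$-sentence about $R$ that mirrors its semantic clause. Conjunction translates to conjunction. For the tensor disjunction $\psi_1 \vee \psi_2$, I would existentially quantify over two fresh $|\vec{w}|$-ary predicates $R_1, R_2$, impose $R = R_1 \cup R_2$, and apply the inductive hypothesis to each $R_i$. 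The case $\exists x\, \psi$ is handled by existentially quantifying over a fresh $(|\vec{w}|{+}1)$-ary predicate $R'$, demanding that $R$ is the projection of $R'$ onto the coordinates of $\vec{w}$ (and that every fibre of the projection is nonempty, to reflect the supplement-function semantics), and applying the inductive hypothesis to $R'$. The case $\forall x\, \psi$ is similar, but $R'$ is simply $R \times M$ and is first-order definable from $R$.

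For direction (ii), given an $\eso$-sentence $\chi(R) = \exists R_1 \dots \exists R_k\, \varphi(R, R_1, \dots, R_k)$ with $\varphi$ first-order, the idea is to simulate each second-order quantifier $\exists R_i$ using team-theoretic devices available in independence logic. A universally quantified fresh variable ranges the team over all elements of $M$, so repeated universal quantification creates a team whose projection onto fresh coordinates is the full Cartesian power $M^n$; combined with independence atoms, such universal quantifications encode Skolem-style choice functions that witness the intended interpretations of the $R_i$. Concretely, for each $R_i$ of arity $m_i$ one introduces fresh variables indexing $M^{m_i}$ together with an auxiliary selector variable constrained by an independence atom to depend only on the indexing tuple, effectively asserting the existence of the characteristic function of $R_i$. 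First-order quantification inside $\varphi$ is handled by the dual quantifiers available in independence logic, and atomic formulas $R_i(\vec{t})$ are then rendered as statements about the value of the selector variable. The wrapper $\exists x\, R(x) \to \chi(R)$ is needed precisely because every independence logic formula is trivially satisfied on the empty team; the translation can therefore only capture those $\eso$-sentences that are already vacuously true on the empty relation.

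The main difficulty, as I see it, is direction (ii): showing that arbitrary second-order existential quantification can be simulated inside independence logic. The bookkeeping is delicate, because one must ensure that the fresh variables and their independence constraints precisely track the intended interpretations of $R_1, \dots, R_k$, with no spurious or under-determined ``choices'' being allowed. Direction (i) is essentially routine once one fixes the right encoding of the team as a relation; the main technical point there is that subformulas of $\phi$ bind additional variables not free in $\phi$, so one should carry through the induction with an explicit choice of domain and appeal to locality to align the teams along the way.
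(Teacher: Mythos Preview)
The paper does not prove this theorem at all: it is quoted from the literature (specifically \cite{gradel2013dependence,galliani2012inclusion}) and used as a black box in \cref{compactness-by-translation}. So there is no ``paper's own proof'' to compare against.

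That said, your outline is broadly faithful to how the cited proofs actually go. Direction~(i) is indeed a routine structural induction along the lines you describe; the only points to watch are the arity bookkeeping when passing under quantifiers (the relation $R'$ for $\exists x\psi$ should project onto $R$ but need not have nonempty fibres over \emph{every} tuple of $R$---it suffices that $R$ equals the projection of $R'$, since lax semantics already allows $F(s)$ to be any nonempty set) and the invocation of locality to keep the domains aligned. Direction~(ii) is where the real work lies, and your sketch is too impressionistic to count as a proof: the standard argument (Galliani's) first passes to a Skolem normal form of $\varphi$, then uses a carefully arranged block of universal quantifiers together with inclusion and independence atoms to force the team to encode the graphs of the Skolem functions and of the relations $R_i$. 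The ``selector variable constrained by an independence atom'' idea is morally right, but getting it to work requires more than what you have written---in particular, one needs the inclusion atom (or a definable substitute in $\foil$) to express that a given tuple lies in the encoded relation, not just independence. If you intend to flesh this out, consult Galliani's construction directly rather than reinventing it.
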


Similarly, it can be proved that dependence logic $\fodl$ and exclusion logic $\fol(\mid)$ are complete with respect to all downwards closed existential second-order team properties, and that inclusion logic $\fol(\subseteq)$ is complete with respect to team properties definable in positive greatest fixed point logic -- we refer the reader to \cite{Vaananen2007-VNNDLA,kontinen2009definability,galliani2012inclusion, zbMATH06680142} for a proof of these results. In particular, this means that dependence, exclusion and inclusion logic are included in independence logic. Interestingly, the addition of either $\to$, $\cneg$ or $\unsim$ to $\indl$ makes the resulting system equi-expressive with full second-order logic \cite{zbMATH06175502, 660bf8a5bf224c62aeac78549c587a66}.  As in the present context we are interested in compactness, and full second-order logic is not compact, we will therefore exclude such systems from our considerations.

\subsection{Compactness in Team Semantics.}\label{Compactness in Team Semantics.}

The previous characterisation of the expressive power for logics of dependence motivates the interest on the issue of compactness. Like $\eso$, independence logic and its fragments are not abstract strong logics in Lindström's sense \parencite{lindstrom1969extensions}, and thus the fact that they extend $\fol$ is not an immediate reason to conclude that they are not compact, or do not satisfy the Löwenheim--Skolem theorem. In fact, it is easy to use the compactness theorem for first order logic to conclude that $\eso$, which is a proper extension of first-order logic, is compact. Similarly, there are several logics which are closely connected to logics of team semantics and which are known to be compact, e.g. first-order logic with Henkin quantifiers \cite{https://doi.org/10.1002/malq.19700160802, walkoe1970finite} and IF-logic \cite{hodges1997some}.
 
For logics over team semantics compactness turns out to be quite a subtle issue. Already in \cite{Vaananen2007-VNNDLA}, Väänänen proved that dependence logic is compact with respect to sentences, i.e. he proved the following claim.
\begin{theorem}[Väänänen]
    Let $\Gamma$ be a set of sentences of dependence logic. If every finite subset $\Gamma_0$ of $\Gamma$ is satisfiable, then $\Gamma$ is satisfiable. 
\end{theorem}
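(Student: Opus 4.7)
The plan is to reduce the theorem to the compactness theorem for first-order logic via the translation of dependence logic into existential second-order logic provided by Theorem~\ref{translation}. Since dependence logic is contained in independence logic, this translation applies to every $\phi \in \Gamma$.

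First, I would invoke Theorem~\ref{translation}(\ref{translation1}) for each $\phi \in \Gamma$. Because $\phi$ is a sentence, its free-variable tuple $\vec v$ is empty, so the auxiliary predicate $R$ produced by the translation is $0$-ary and may be dispensed with after noting that satisfiability always means satisfiability in a nonempty team (so $R$ may be set to true). What remains is a genuine $\eso$-sentence $\chi_\phi$ in the signature $\tau$ for which
\[
    \M \models \phi \iff \M \models \chi_\phi,
\]
where $\M \models \phi$ is understood in the usual way as $\M \models_{\{\emptyset\}} \phi$.

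Next I would pass from $\eso$ to $\fol$ by a Skolem-style reduction. Writing each $\chi_\phi$ as $\exists \vec S_\phi\, \psi_\phi(\vec S_\phi)$ with $\psi_\phi$ first-order, I would take the tuples $\vec S_\phi$ to consist of fresh relation symbols, pairwise disjoint across distinct $\phi$, and consider the first-order theory
\[
    \Gamma' \coloneqq \{\psi_\phi(\vec S_\phi) \mid \phi \in \Gamma\}
\]
over the expanded signature $\tau \cup \bigcup_{\phi \in \Gamma} \vec S_\phi$. Any finite $\Gamma_0 \subseteq \Gamma$ has, by hypothesis, a model $\M_0$; choosing witnesses for the existential second-order quantifiers of the finitely many $\chi_\phi$ with $\phi \in \Gamma_0$ expands $\M_0$ to a model of the corresponding finite subset of $\Gamma'$. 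Hence $\Gamma'$ is finitely satisfiable, and the classical compactness theorem for first-order logic provides a model whose reduct to $\tau$ satisfies every $\chi_\phi$, and therefore every $\phi \in \Gamma$ with respect to the team $\{\emptyset\}$.

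The argument is essentially routine once Theorem~\ref{translation} is in hand; the only real verification is that the Skolemization step respects finite satisfiability, which is immediate since any finite $\Gamma_0$ introduces only finitely many fresh relation symbols. The deeper obstruction -- and precisely the motivation for the rest of the present paper -- is that this strategy does \emph{not} lift to formulas with free variables: the translation then couples the formulas through the common relation $X[\vec v]$, and a bare Skolemization cannot enforce the required coherent interpretation of $R$ across infinitely many formulas simultaneously.
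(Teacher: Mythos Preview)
Your proposal is correct and follows precisely the approach the paper attributes to V\"a\"an\"anen: translate each sentence into $\eso$ via Theorem~\ref{translation}, then invoke the compactness of $\eso$, which you make explicit by Skolemizing the second-order existential quantifiers and applying first-order compactness. The paper does not give a detailed proof of this statement itself, only the one-line summary ``V\"a\"an\"anen's proof is based on the translation to $\eso$, and on the fact that this latter system is compact,'' and your write-up faithfully unpacks exactly that.
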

\noindent  Väänänen's proof is based on the translation to $\eso$, and on the fact that this latter system is compact. Compactness for sets of sentences of inclusion and independence logic follows in the same way by using the translation to existential second-order logic.
 
It should also be remarked that the previous version of compactness, which we can dub as “satisfiability compactness”, is not equivalent to the “consequence compactness”, namely the statement saying that, if $\Gamma\models \phi$, then there is some finite $\Gamma_0\subseteq \Gamma$ such that $\Gamma_0\models \phi$. This follows immediately from the fact~\parencite[§4.2]{Vaananen2007-VNNDLA} that there is a sentence $\Phi_\infty$ of dependence logic such that
\[
    \M\models\Phi_\infty \iff \text{$\M$ is infinite.}
\]

\noindent If we let $\phi_n$ be the first-order formula stating that there exists at least $n$-many elements, then $\{\phi_n \mid n<\omega  \}\models \phi_\infty$ but for any finite subset $\Gamma_0\subseteq\{\phi_n \mid n<\omega  \}$, we have that $\Gamma_0\nmodels \phi_\infty$.

Now, although compactness for sets of sentences of independence logic and its fragment has been known for long, it is not immediately obvious that the same property holds for sets of formulas with free variables as well. In fact, variables have a different role in team semantics compared to the standard Tarski semantics. In the context of first order logic, one can use the method of replacing free variables by fresh constant symbols to prove that compactness for formulas is equivalent to compactness for sentences.

In stronger logics with team semantics, however, the role of free variables cannot be mimicked by constants, as their interpretation may vary in different assignments of a team. Replacing a variable $x$ by a constant would be equivalent to adding the constancy atom $\dep(x)$ to the set of formulas one is inspecting. Next we present an example of a set of formulas where adding constancy atoms for the free variables destroys the satisfiability of the set.

\begin{example}
    Let $\Gamma = \{\forall y\ y\subseteq x, \exists y \exists z\ \neg y = z\}$. Clearly $\Gamma$ is satisfiable, as demonstrated by a model with domain $\{0,1\}$ and the team $\{\{(x,0)\}, \{(x,1)\}\}$. On the other hand, $\Gamma\cup\{\dep(x)\}$ is not satisfiable, which is seen as follows. Let $\M\models_X\Gamma$. Then $\M$ has at least two distinct elements, say $a$ and $b$, and $\M\models_{X(M/y)}y\subseteq x$. By the definition of duplication, both $a$ and $b$ occur as values of $y$ in $X(M/y)$, and by the semantics of the inclusion atom, they also must occur as values of $x$. But then there are $s,s'\in X$ such that $s(x) = a \neq b = s'(x)$. Thus $\M\nmodels_X\dep(x)$.
\end{example}

Interestingly, the use of the inclusion atom (or alternatively e.g. the independence atom) in the previous example turns out to be of fundamental importance. If we concentrate only on the  downwards closed fragment of independence logic, then it is possible to use constants to prove compactness for formulas in a method analogous to that of first-order logic. We first recall the following fact, which is an easy generalization of \cite[Lem. 3.28]{Vaananen2007-VNNDLA}.

\begin{fact}[Substitution Lemma]\label{substitution.lemma}
    Let $\phi\in \fol( \dep(\dots), \perp_c, \subseteq, |)$ and $t(x,\vec{y})$ a $\tau$-term. For any model $\M$ and team $X$,
    \[
    \M\models_X \phi(t/x) \Longleftrightarrow \M\models_{X(t/x)} \phi,
    \]
    where $X(t/x)=\{ s(s(t)/x)  \mid s\in X \}$ and $\phi(t/x)$ is the formula one obtains by replacing each free occurrence of $x$ in $\phi$ by $t$.
\end{fact}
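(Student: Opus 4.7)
The plan is to proceed by induction on the complexity of $\phi$, after first reducing, via $\alpha$-conversion, to the case where the bound variables of $\phi$ are disjoint from $\{x\}\cup\Var(t)$. A preliminary subsidiary induction on terms yields the standard identity $s(u(t/x)) = s(s(t)/x)(u)$ for every $\tau$-term $u$; from this the atomic cases (equalities, negated equalities, $R(\vec{u})$, $\neg R(\vec{u})$) follow by unwinding the definitions, and the dependency atoms $\dep(\dots)$, $\perp_c$, $\subseteq$, $|$ are handled the same way since their semantics only depends on the tuples $s(\vec{u})$ for terms $\vec{u}$ appearing in them.

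For the connective cases, $\land$ is immediate from the induction hypothesis. For $\lor$, the key observation is that setting $Y' \coloneqq Y(t/x)$ and $Z' \coloneqq Z(t/x)$ gives a decomposition $X(t/x) = Y' \cup Z'$ whenever $X = Y\cup Z$, and conversely any decomposition $X(t/x) = Y'\cup Z'$ is induced by $Y \coloneqq \{s\in X \mid s(s(t)/x)\in Y'\}$ and $Z \coloneqq \{s\in X \mid s(s(t)/x)\in Z'\}$; the induction hypothesis then transports the two conjuncts.

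The quantifier cases are the main obstacle, because one has to check that supplementation and duplication commute with the substitution operation $(\cdot)(t/x)$. Using the assumption $y\notin\{x\}\cup\Var(t)$, a direct unfolding gives
\[
    X(F/y)(t/x) = \{s(s(t)/x)(a/y) \mid s\in X,\ a\in F(s)\},
\]
since $s(a/y)(t)=s(t)$ (as $y\notin\Var(t)$) and $s(a/y)(s(t)/x) = s(s(t)/x)(a/y)$ (as $y\neq x$). Defining $F'\colon X(t/x)\to\Pow^+(\M)$ by $F'(r) \coloneqq \bigcup\{F(s) \mid s\in X,\ s(s(t)/x)=r\}$ then yields $X(t/x)(F'/y)=X(F/y)(t/x)$ as sets of assignments, so the induction hypothesis applied to $\psi$ settles the existential case. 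The universal case is easier: the same calculation with $F\equiv \M$ gives $X(\M/y)(t/x) = X(t/x)(\M/y)$ directly.

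The remaining work, for each dependency atom, is just to verify that the term-substitution identity above propagates through its defining clause; this is routine bookkeeping of the form "$s(\vec{u})=s'(\vec{u})$ iff $s(s(t)/x)(\vec{u}(t/x))=s'(s'(t)/x)(\vec{u}(t/x))$", so no new ideas are required beyond the atomic case. The only place where one really needs to think is the existential step above, where the lack of injectivity of $s\mapsto s(s(t)/x)$ forces the union in the definition of $F'$.
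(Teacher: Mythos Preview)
The paper does not prove this statement: it is recorded as a \emph{Fact} with a pointer to \cite[Lem.~3.28]{Vaananen2007-VNNDLA}, so there is no proof in the paper to compare against. Your argument is the standard structural induction and is correct.

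Two minor remarks. First, in the existential step you only spell out the forward direction (from a supplement $F$ on $X$ to $F'$ on $X(t/x)$); the converse, starting from $G\colon X(t/x)\to\Pow^+(\M)$ and setting $F(s)\coloneqq G(s(s(t)/x))$, is easier but should at least be mentioned, since the identity $X(F/y)(t/x)=X(t/x)(F'/y)$ alone does not immediately give it. Second, the paper's official syntax allows only \emph{variables} in the dependency atoms, so strictly speaking $\phi(t/x)$ need not be a formula of the language when $x$ occurs inside such an atom; your treatment tacitly uses the extended syntax with terms in atoms, which the paper explicitly notes is a harmless extension (the paragraph after the semantic clauses for the atoms). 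It would be worth flagging this convention at the start of your proof.
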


\noindent If a logic is downwards closed, we can then derive the compactness theorem for formulas from that of sentences. 
\begin{theorem}
	The two following versions of compactness are equivalent for any downwards closed fragment $\mathcal{L}$ of $\fol( \dep(\dots), \perp_c, \subseteq, |)$.
	\begin{enumerate}
		\item Let $\Gamma$ be a set of formulas of $\mathcal{L}$. If every finite subset $\Gamma_0$ of $\Gamma$ is satisfiable, then $\Gamma$ is satisfiable. \label{DL-compactness for formulas}
		\item Let $\Gamma$ be a set of sentences of $\mathcal{L}$. If every finite subset $\Gamma_0$ of $\Gamma$ is satisfiable, then $\Gamma$ is satisfiable. \label{DL-compactness for sentences}
	\end{enumerate}
\end{theorem}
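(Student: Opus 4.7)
My plan: the direction \ref{DL-compactness for formulas} $\Rightarrow$ \ref{DL-compactness for sentences} is immediate since every sentence is a formula, so the real work lies in \ref{DL-compactness for sentences} $\Rightarrow$ \ref{DL-compactness for formulas}. I would prove this by reducing a set of formulas to a set of sentences via fresh constant symbols, in the style of the classical first-order argument, with downwards closure supplying the extra leverage that team semantics requires.

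Concretely, given a set $\Gamma$ of $\tau$-formulas of $\mathcal{L}$ all of whose finite subsets are satisfiable, let $V$ be the set of variables occurring free in $\Gamma$, introduce a fresh constant $c_x$ for each $x\in V$, and set $\tau^{\ast}=\tau\cup\{c_x\mid x\in V\}$. For every $\phi\in\Gamma$ let $\phi^{\ast}$ be the $\tau^{\ast}$-sentence obtained from $\phi$ by substituting $c_x$ for each free occurrence of $x$, and put $\Gamma^{\ast}=\{\phi^{\ast}\mid\phi\in\Gamma\}$. I would then check that every finite $\Gamma_0^{\ast}\subseteq\Gamma^{\ast}$ is satisfiable: taking $\M$ and a nonempty team $X$ with $\M\models_X\Gamma_0$, I pick any $s\in X$ and invoke downwards closure to get $\M\models_{\{s\}}\Gamma_0$; expanding $\M$ to a $\tau^{\ast}$-structure $\M^{\ast}$ by setting $c_x^{\M^{\ast}}=s(x)$ for $x\in\dom(s)$ (and arbitrarily otherwise), iterated use of the Substitution Lemma, combined with locality to handle variables not free in a given $\phi$, yields $\M^{\ast}\models_{\{\emptyset\}}\Gamma_0^{\ast}$.

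By assumption \ref{DL-compactness for sentences}, then, $\Gamma^{\ast}$ is satisfied by some $\N^{\ast}$, and since the members of $\Gamma^{\ast}$ are sentences we may take the witnessing team to be $\{\emptyset\}$. Defining $s\colon V\to\N^{\ast}$ by $s(x)=c_x^{\N^{\ast}}$ and reading the Substitution Lemma in reverse, the $\tau$-reduct $\N$ of $\N^{\ast}$ satisfies $\N\models_{\{s\}}\phi$ for every $\phi\in\Gamma$, so $\Gamma$ is satisfied by the nonempty team $\{s\}$.

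The one delicate step, and the sole reason the theorem requires downwards closure, is the passage from an arbitrary witnessing team $X$ for $\Gamma_0$ to a single assignment $\{s\}$: each $c_x$ can encode only a single element, so unless the witness team may be shrunk to a singleton the interpretation of $c_x$ cannot reproduce the variation of $x$ across $X$. The inclusion-logic example in \cref{Compactness in Team Semantics.} shows that this failure is genuine once downwards closure is dropped, so both the reduction and its correctness truly hinge on this hypothesis rather than on any subtle feature of the Substitution Lemma or of the particular atoms in $\mathcal{L}$.
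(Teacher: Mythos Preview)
Your proposal is correct and follows essentially the same route as the paper's own proof: replace free variables by fresh constants, use downwards closure to shrink the witnessing team of each finite subset to a singleton before interpreting the constants, apply compactness for sentences, and then invert the substitution via the Substitution Lemma. Your explicit mention of locality (to discard variables not free in a given $\phi$ and to pass from an arbitrary nonempty team to $\{\emptyset\}$ for sentences) and your closing remark on why downwards closure is indispensable are details the paper leaves implicit, but the argument is the same.
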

\begin{proof}
	Direction from~\ref{DL-compactness for formulas} to~\ref{DL-compactness for sentences} is obvious, for every sentence is a formula. Suppose for the converse that $\Gamma$ is a finitely consistent set of $\mathcal{L}$-formulas in some signature $\tau$. Let $x_i$, $i<\kappa$, enumerate all the free variables of $\Gamma$. We expand the signature $\tau$ with $\kappa$-many fresh constants $c_i$, $i<\kappa$, and we let $\Gamma'\coloneqq\{ \phi(\vec{c}/\vec{x}) \mid \phi \in \Gamma \}$ be the theory obtained by replacing every free variable $x_i$ by the corresponding constant $c_i$.
	
	Let $\Gamma'_0\subseteq\Gamma'$ be finite. Then we can find a finite $\Gamma_0\subseteq\Gamma$ such that $\Gamma'_0 = \{ \phi(\vec{c}/\vec{x}) \mid \phi \in \Gamma_0 \}$. By assumption, $\Gamma_0$ is satisfiable, and thus there is a $\tau$-structure $\M_0$ and a non-empty team $X_0$ such that $\M_0\models_{X_0}\Gamma_0$. Pick $s_0\in X_0$. Then, since $\mathcal{L}$ is downwards closed, it follows that $\M_0\models_{\{s_0\}}\Gamma_0$.
	
	Let $\N_0$ be the expansion of $\M_0$ to the signature $\tau\cup\{ c_i \mid i<\kappa \}$ such that for all $i<\kappa$, $c_i^{\N_0}=s_0(x_i)$. It clearly follows that $\N_0\models\Gamma'_0$, showing that $\Gamma'$ is finitely consistent.  By~\ref{DL-compactness for sentences} we then obtain that $\Gamma'$ is consistent, and hence for some $\N$ we have $\N\models\Gamma'$, i.e. $\N\models_{ \{\emptyset \}}\Gamma'$. Let $\M$ be the reduct of $\N$ to the signature $\tau$, and fix an assignment $s$ of $\M$ such that $s(x_i)=c_i^\N$ for all $i<\kappa$. By the Substitution Lemma we then obtain that $\M\models_{ \{s \} } \Gamma$, which proves our claim.
\end{proof}

In previous literature, there has been some interest in the issue of compactness for logics over team semantics. In particular, \textcite{luck2020team} has introduced a suitable notion of ultraproduct, which he used to prove the compactness of $\fol(\unsim)$, which he calls \emph{first-order team logic}. In a different direction, \textcite{https://doi.org/10.48550/arxiv.1904.08695} have provided a proof of compactness for sets of formulas of independence logic with countably many variables. In the following sections we build upon these previous accomplishments and provide two different proofs of the compactness theorem for formulas. In \cref{compactness-by-ultraproducts}, we build upon Martin Lück's ultraproduct construction, whilst in \cref{compactness-by-translation}, we use model-theoretic tools to generalize Kontinen and Yang's proof to arbitrary sets of formulas of independence logic.

\section{Compactness via Łoś' Theorem} \label{compactness-by-ultraproducts}

In this section, we recall the ultraproduct construction of \textcite{luck2020team} and generalize Łoś' Theorem for a variety of team-based extensions of first-order logic, providing a proof of the compactness theorem for each of them.

\subsection{Ultrafilters and Ultraproducts.} We briefly recall the notion of ultraproducts and introduce our notation regarding them. For a thorough introduction to ultraproducts in classical model theory, see e.g.~\textcite{Chang-Keisler}.

For any index set $I$, given a set $A_i$ for each $i\in I$, we denote by $\prod_{i\in I}A_i$ the Cartesian product of the sets $A_i$, i.e. the set of all functions $f$ with $\dom(f)=I$ such that $f(i)\in A_i$ for all $i\in I$. We also denote by $(a_i)_{i\in I}$ the element $f\in\prod_{i\in I}A_i$ such that $f(i)=a_i$ for all $i\in I$.

A \emph{(proper) filter} on $I$ is a family $\F\subseteq\Pow(I)$ such that
\begin{enumerate}
    \item $\emptyset\notin\F$ and $I\in\F$,
    \item for all $A,B\subseteq I$, if $A,B\in\F$, then $A\cap B\in\F$, and
    \item for all $A,B\subseteq I$, if $A\supseteq B\in\F$, then $A\in\F$.
\end{enumerate}
A filter $\U$ on $I$ is an \emph{ultrafilter} if for any $A\subseteq I$, either $A\in\U$ or $I\setminus A\in\U$. Note also that, as $A\cap (I\setminus A) = \emptyset$, it cannot be the case that they both are in $\U$. An ultrafilter is \emph{principal} if it contains a finite set. We are usually interested in non-principal ultrafilters.

A family $\F\subseteq\Pow(I)$ has the \emph{finite intersection property} if for all $n<\omega$ and $A_0,\dots,A_{n-1}\in\F$, we have $\bigcap_{i<n}A_i\neq\emptyset$. Clearly every filter has the finite intersection property, and it is easy to show that any family of subsets of $I$ with the finite intersection property can be extended into an ultrafilter on $I$.

Given $A_i$ for each $i\in I$ and an ultrafilter $\U$ on $I$, we define an equivalence relation
\[
	f\equiv g \mod\U \iff \{i\in I \mid f(i) = g(i)\}\in\U
\]
on $\prod_{i\in I}A_i$. We denote by $\prod_{i\in I}A_i/\U$ its set of equivalence classes and by $f/\U$ the equivalence class of $f\in\prod_{i\in I}A_i$.  

\begin{definition}[Ultraproduct of structures]
	Let $\tau$ be a vocabulary and $\M_i$ a $\tau$-structure for each $i\in I$. We define the $\tau$-structure $\M\coloneqq\prod_{i\in I}\M_i/\U$ as follows and call it the \emph{ultraproduct} of $\M_i$.
	\begin{itemize}
		\item The domain of $\M$ is the set $M\coloneqq\prod_{i\in I}M_i/\U$, where $M_i = \dom(\M_i)$.
		\item Given a constant symbol $c\in\tau$, we let $c^\M = (c^{\M_i})_{i\in I}/\U$.
		\item Given an $n$-ary relation symbol $R\in\tau$, we let $R^\M$ be the set of all tuples $(f_0/\U,\dots,f_{n-1}/\U)$ such that
		\[
			\{i\in I \mid (f_0(i),\dots,f_{n-1}(i))\in R^{\M_i}\}\in\U.
		\]
		\item Given an $n$-ary function symbol $F\in\tau$, we let
		\[
    		F^\M(f_0/\U,\dots,f_{n-1}/\U) = (F^{\M_i}(f_0(i),\dots,f_{n-1}(i)))_{i\in I}/\U.
		\]
	\end{itemize}
	If each $\M_i$ is the same model $\M_0$, we call $\M$ the \emph{ultrapower} of $\M_0$ and denote it by $\M_0^I/\U$.
\end{definition}
\medskip

 \emph{For the rest of this section, we fix a vocabulary $\tau$, an index set $I$, $\tau$-structures $\M_i$, $i\in I$, and an ultrafilter $\U$ on $I$, and we denote by $\M$ the ultraproduct $\prod_{i\in I}\M_i/\U$.} 
 
 \medskip
 
 Given assignments $s_i\colon D\to\M_i$, $i\in I$, we denote by $(s_i)_{i\in I}$ the assignment $s\colon D\to\prod_{i\in I}\M_i$ such that $s(x) = (s_i(x))_{i\in I}$ for all $x\in D$. Given an assignment $s\colon D\to\prod_{i\in I}\M_i$, we denote by $s/\U$ the assignment $t\colon D\to\M$ such that $t(x)=s(x)/\U$ for all $x\in D$. Next we define the ultraproduct of teams, first introduced in~\cite[Def. 5.32]{luck2020team}.
\begin{definition}[Ultraproduct of teams]
 	Let $D$ be a (possibly infinite) set of variables. Given a team $X_i$ of $\M_i$ with domain $D$ for each $i\in I$, we define its \emph{team ultraproduct} $\prod_{i\in I}X_i/\U$ as  the set of all assignments $s\colon D\to\M$ such that there are $s_i\colon D\to\M_i$, $i\in I$, with $s = (s_i)_{i\in I}/\U$ and $\{i\in I \mid s_i\in X_i\}\in\U$.
\end{definition}

\begin{lemma}\label{Basic lemma about assignments}\quad
	\begin{enumerate}
		\item Every assignment $s\in\left(\prod_{i\in I}M_i\right)^D$ is of the form $(s_i)_{i\in I}$ for some $s_i\in M_i^D$, and every assignment $s\in M^D$ is of the form $t/\U$ for some $t\in\left(\prod_{i\in I}M_i\right)^D$. \label{Assignments are of the expected form}
		
		\item Given $s\in\left(\prod_{i\in I}M_i\right)^D$, $f\in\prod_{i\in I}M_i$ and $x\in D$, we have
		\[
			\frac{s}{\U}\!\left( \frac{f}{\U} \bigg / x \right) = s(f/x)/\U.
		\]
		\label{Modified assignment as the equivalence class of modified assignments}
		
		\item Let $s\in\left(\prod_{i\in I}M_i\right)^D$, $f\in\prod_{i\in I}M_i$ and $x\in D$, denote $t = s(f/x)$, and let $s_i,t_i\in M_i^D$ be such that $s=(s_i)_{i\in I}$ and $t=(t_i)_{i\in I}$. Then $t_i = s_i(f(i)/x)$. \label{Modified assignment as the sequence of modified quotient assignments}
		
		\item Let $s=(s_i)_{i\in I}/\U$ and $t=(t_i)_{i\in I}/\U$ be assignments of $\M$ with domain $D$. Then $s=t$ if $\{i\in I \mid s_i = t_i\}\in\U$. \label{Sufficient condition for two ultraproduct assignments being the same}
	\end{enumerate}
\end{lemma}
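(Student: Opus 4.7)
The lemma is a compendium of routine bookkeeping about how assignments into a Cartesian product and into an ultraproduct interact with the two coordinate-wise operations (indexing by $i\in I$, and quotienting by $\U$). I would prove each of the four items separately by tracing definitions.

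For \ref{Assignments are of the expected form}, the plan is to define the relevant $s_i$ and $t$ by coordinate-wise projection. Given $s\in\bigl(\prod_{i\in I}M_i\bigr)^D$, set $s_i(x) \coloneqq s(x)(i)$ for each $x\in D$ and $i\in I$; this is a well-defined element of $M_i^D$ and by construction $s(x) = (s_i(x))_{i\in I}$, i.e.\ $s=(s_i)_{i\in I}$. For the second half, given $s\in M^D$ and each $x\in D$, pick (using the axiom of choice if $|D|>1$) some $f_x\in\prod_{i\in I}M_i$ with $s(x) = f_x/\U$ (possible since $M = \prod_{i\in I}M_i/\U$), and set $t(x)\coloneqq f_x$. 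Then $(t/\U)(x) = t(x)/\U = f_x/\U = s(x)$.

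For \ref{Modified assignment as the equivalence class of modified assignments}, I would simply evaluate both sides at an arbitrary $y\in D$. On the left, the modified assignment $(s/\U)(f/\U\,/\,x)$ outputs $f/\U$ at $x$ and $(s/\U)(y)=s(y)/\U$ otherwise. On the right, $s(f/x)/\U$ outputs $s(f/x)(x)/\U = f/\U$ at $x$ and $s(f/x)(y)/\U = s(y)/\U$ otherwise; the two agree pointwise and hence as assignments. Item \ref{Modified assignment as the sequence of modified quotient assignments} is the analogous computation one level down: with $t = s(f/x)$, evaluating at the $i$-th coordinate gives $t_i(x) = t(x)(i) = f(i)$ and $t_i(y) = t(y)(i) = s(y)(i) = s_i(y)$ for $y\neq x$, whence $t_i = s_i(f(i)/x)$.

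Finally, for \ref{Sufficient condition for two ultraproduct assignments being the same}, fix any $x\in D$ and note that the set
\[
    \{i\in I \mid s_i(x) = t_i(x)\} \supseteq \{i\in I \mid s_i = t_i\}
\]
lies in $\U$ by upward closure. Hence $(s_i(x))_{i\in I} \equiv (t_i(x))_{i\in I} \pmod{\U}$, i.e.\ $s(x) = t(x)$; since $x$ was arbitrary, $s=t$.

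No step should be a genuine obstacle: the entire lemma is a matter of checking that the two commuting operations (Cartesian-product indexing and $\U$-quotient) behave as the notation suggests. The only subtlety is the implicit use of choice when lifting an $M$-valued assignment on an infinite domain $D$ to a $\prod_{i\in I}M_i$-valued one in \ref{Assignments are of the expected form}, which I would mention briefly but not belabor.
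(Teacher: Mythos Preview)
Your proof is correct and is exactly the routine unwinding of definitions that the paper has in mind: the paper's own proof consists of the single sentence ``Routine work with ultraproducts.'' Your item-by-item verification (coordinate projections for \ref{Assignments are of the expected form}, pointwise evaluation for \ref{Modified assignment as the equivalence class of modified assignments} and \ref{Modified assignment as the sequence of modified quotient assignments}, and upward closure of $\U$ for \ref{Sufficient condition for two ultraproduct assignments being the same}) is precisely what that phrase abbreviates.
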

\begin{proof}
    Routine work with ultraproducts.
\end{proof}

The following basic lemma will be crucial later.
\begin{lemma}\label{Basic lemma about teams}
	Let $X_i$ and $Y_i$ be teams over $\M_i$, let $a=(a_i)_{i\in I}/\U \in\M$ and let $F_i\colon X_i\to\wp^+(M_i)$ be a supplement function of $X_i$. Denote $X=\prod_{i\in I}X_i/\U$ and $Y=\prod_{i\in I}Y_i/\U$. Then the following hold.
	\begin{enumerate}
		\item $X\cup Y = \prod_{i\in I}(X_i\cup Y_i)/\U$; moreover, if $X_i$ and $Y_i$ are disjoint for $\U$-many $i$, then $X$ and $Y$ are disjoint, \label{Tensoring in quotient teams}
		\item $X(a/x) = \prod_{i\in I}X_i(a_i/x)/\U$, \label{Simple supplementation in quotient teams}
		\item $X(M/x) = \prod_{i\in I}X_i(M_i/x)/\U$, and \label{Duplication in quotient teams}
		\item $X(F/x) = \prod_{i\in I}X_i(F_i/x)/\U$, where $F\colon X\to\wp^+(\M)$ is such that
		\[
			F((s_i)_{i\in I}/\U) = \{f/\U \mid f\in\prod_{i\in I}F_i(s_i) \};
		\]
		moreover, if for $\U$-many $i$, $F_i(s)$ is a singleton for all $s\in X_i$, then $F(s)$ is a singleton for all $s\in X$.
		\label{Supplementation in quotient teams}
	\end{enumerate}
\end{lemma}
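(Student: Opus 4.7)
The plan is to verify each clause by chasing definitions and repeatedly invoking \cref{Basic lemma about assignments}, which translates between operations on assignments of $\prod_{i\in I}\M_i$ and on their quotients in $\M$.

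For clause (i), the inclusion $X\cup Y\subseteq\prod_{i\in I}(X_i\cup Y_i)/\U$ will be immediate from closure of $\U$ under supersets; for the reverse inclusion, I would split $\{i\mid s_i\in X_i\cup Y_i\}$ as $\{i\mid s_i\in X_i\}\cup\{i\mid s_i\in Y_i\}$ and invoke the ultrafilter property to force one of the halves into $\U$. For the disjointness statement, given $s\in X\cap Y$ witnessed by representations $(s_i)_{i\in I}/\U$ and $(t_i)_{i\in I}/\U$ with $A=\{i\mid s_i\in X_i\}$ and $B=\{i\mid t_i\in Y_i\}$ both in $\U$, I would intersect $A$, $B$ and $\{i\mid X_i\cap Y_i=\emptyset\}$ and derive a contradiction using the equality criterion of part (iv) of the preceding lemma.

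For clauses (ii) and (iii), an element of $X(a/x)$ or $X(M/x)$ has the form $s(a/x)$ with $s=(s_i)_{i\in I}/\U\in X$ and an appropriate $a=(a_i)_{i\in I}/\U$; by part (ii) of \cref{Basic lemma about assignments} this rewrites as $(s_i(a_i/x))_{i\in I}/\U$, which lies on the right-hand side because $s_i\in X_i$ for $\U$-many $i$. For the reverse direction, given $(t_i)_{i\in I}/\U$ with $t_i\in X_i(a_i/x)$ on a $\U$-set $A$, I would choose preimages $s_i\in X_i$ with $t_i=s_i(a_i/x)$ on $A$, extending arbitrarily off $A$, to reconstruct the witnessing $s\in X$ and, in case (iii), the element $a\in\M$. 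Clause (iv) combines these ideas but also requires the well-definedness of $F\colon X\to\wp^+(\M)$: non-emptiness follows by applying the axiom of choice to $\{F_i(s_i)\}_{i\in I}$, and independence of the representative again reduces to the equality criterion. The equality $X(F/x)=\prod_{i\in I}X_i(F_i/x)/\U$ then unfolds as in (ii)--(iii), using that every $f/\U\in F((s_i)_{i\in I}/\U)$ comes from some $f\in\prod_{i\in I}F_i(s_i)$ with each coordinate $s_i(f(i)/x)$ lying in $X_i(F_i/x)$. For the singleton \emph{moreover}, if $F_i(s_i)=\{a_i\}$ on a $\U$-set, then every such $f$ coincides with $(a_i)_{i\in I}$ there, whence $f/\U=(a_i)_{i\in I}/\U$.

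I expect the main subtleties to lie in the disjointness claim in (i) and in the well-definedness of $F$ in (iv); both force us to compare distinct representatives of a single equivalence class, where only pointwise agreement modulo $\U$ is available, which becomes delicate when the variable domain $D$ is infinite.
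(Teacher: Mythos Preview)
Your plan matches the paper's proof closely. The only organisational difference is that the paper derives clauses~(ii) and~(iii) as special cases of~(iv) (constant supplement $F(s)=\{a\}$ and $F(s)=M$, respectively), whereas you unfold them directly; either route is fine. For the ``moreover'' part of~(i), the paper instead asserts that a ``dual argument'' yields $X\cap Y=\prod_{i\in I}(X_i\cap Y_i)/\U$ and then works with a single representative, while you keep two representatives and intersect the three $\U$-sets.

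Your closing caveat is not mere caution: when $D$ is infinite, the converse of \cref{Basic lemma about assignments}\,(iv) genuinely fails, and with it both the disjointness clause in~(i) and the well-definedness of $F$ in~(iv). Concretely, take $I=D=\omega$, $\M_i=\omega$, a nonprincipal $\U$, and set $X_i=\{s\mid s(v_i)=0\}$, $Y_i=\{s\mid s(v_i)=1\}$. Then $X_i\cap Y_i=\emptyset$ for every $i$, yet the constant-$[0]$ assignment lies in both $X$ and $Y$ (witnessed by $s_i\equiv 0$ for $X$ and by $t_i$ with $t_i(v_i)=1$, $t_i(v_n)=0$ otherwise for $Y$). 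The same device, with $F_i(s)=\{s(v_i)\}$, shows that $F$ as defined in~(iv) depends on the chosen representative. So your sketch, the paper's ``dual argument'', and indeed the statement itself, only go through as written when $D$ is finite; in that case $\{i\mid s_i=t_i\}=\bigcap_{x\in D}\{i\mid s_i(x)=t_i(x)\}\in\U$ and your contradiction closes. You should make that finiteness hypothesis explicit where you need the converse of the equality criterion, rather than appealing to \cref{Basic lemma about assignments}\,(iv), which only gives the forward direction.
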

\begin{proof}\quad
	\begin{enumerate}
		\item Let $s = (s_i)_{i\in I}/\U \in X$. As $X_i\subseteq X_i\cup Y_i$ for all $i\in I$, we have $s_i\in X_i\cup Y_i$, whence $s\in \prod_{i\in I}(X_i\cup Y_i)/\U$. Thus $X\subseteq\prod_{i\in I}(X_i\cup Y_i)/\U$. Similarly $Y\subseteq\prod_{i\in I}(X_i\cup Y_i)/\U$, whence $X\cup Y\subseteq\prod_{i\in I}(X_i\cup Y_i)/\U$.
		
		For the converse, suppose that $s = (s_i)_{i\in I}/\U\in\prod_{i\in I}(X_i\cup Y_i)/\U$. Now there is $J_0\in\U$ such that $s_i\in X_i\cup Y_i$ for all $i\in J_0$. We wish to show that $s\in X\cup Y$. For this, it is enough to show that either of the sets
		\[
			\{i\in I \mid s_i \in X_i \} \quad\text{and}\quad \{i\in I \mid s_i \in Y_i \}
		\]
		is in $\U$. So suppose not: then both of the sets
		\[
			\{i\in I \mid s_i \notin X_i \} \quad\text{and}\quad \{i\in I \mid s_i \notin Y_i \}
		\]
		and thus their intersection 
		$J_1 \coloneqq \{i\in I \mid s_i \notin X_i\cup Y_i\}$ are in $\U$. But then we have $s_i\in X_i\cup Y_i$ and $s_i\notin X_i\cup Y_i$ for all $i\in J_0\cap J_1$, whence $J_0\cap J_1 = \emptyset$, which is a contradiction because $\emptyset\notin\U$. Thus one of the claimed sets is in the ultrafilter, which proves our claim.
		
		For the ``moreover'' part, suppose that $J_0\coloneqq\{i\in I \mid \text{$X_i$ and $Y_i$ are disjoint} \}\in\U$. Suppose for a contradiction that there is $s=(s_i)_{i\in I}/\U\in X\cap Y$. A dual argument to the above shows that $X\cap Y = \prod_{i\in I}(X_i\cap Y_i)/\U$, so now there is $J_1\in\U$ such that $s_i\in X_i\cap Y_i$ for all $i\in J_1$. But then again, $\emptyset = J_0\cap J_1\in\U$, a contradiction.
		
		\item Follows from~\ref{Supplementation in quotient teams}, as $X(a/x)$ is just $X$ supplemented by the constant function $F(s) = \{a\}$.
		
		\item Follows from~\ref{Supplementation in quotient teams}, as duplication is supplementation by the constant function $F(s) = M$.
		
		\item Let $s/\U\in X(F/x)$. Then there is $t/\U\in X$ and $f/\U\in F(t/\U)$ such that
		\[
			s/\U = \frac{t}{\U}\!\left( \frac{f}{\U}\bigg/ x \right).
		\]
		Let $t_i\in \M_i^D$ be such that $t=(t_i)_{i\in I}$. Now by the definition of ultraproduct of teams and Lemma~\ref{Basic lemma about assignments}~\ref{Sufficient condition for two ultraproduct assignments being the same}, there is $J_0\in\U$ such that $t_i\in X_i$ for all $i\in J_0$. By the definition of $F$, there is $J_1\in\U$ such that $f(i)\in F_i(t_i)$ for all $i\in J_1$. Thus for $i\in J_0\cap J_1$ we have $t_i(f(i)/x)\in X_i(F_i/x)$ and hence, by definition of ultraproduct of teams,
		\[
			(t_i(f(i)/x))_{i\in I}/\U\in\prod_{i\in I}X_i(F_i/x)/\U.
		\]
		By Lemma~\ref{Basic lemma about assignments}~\ref{Modified assignment as the equivalence class of modified assignments} and~\ref{Modified assignment as the sequence of modified quotient assignments}, $s/\U = t(f/x)/\U = (t_i(f(i)/x))_{i\in I}/\U$, whence we conclude $s/\U \in \prod_{i\in I}X_i(F_i/x)/\U$ as desired.
		
		For the converse, let $s/\U\in\prod_{i\in I}X_i(F_i/x)/\U$. Then $s=(s_i)_{i\in I}$ for some $s_i$ such that there is $J\in\U$ with $s_i\in X_i(F_i/x)$ for all $i\in J$. Now, for each $i\in J$ there is $t_i\in X_i$ and $a_i\in F_i(t_i)$ with $s_i = t_i(a_i/x)$. Define $f\in\prod_{i\in I}\M_i$ by setting $f(i)=a_i$ for $i\in J$ and $f(i) = s_i(x)$ for $i\in I\setminus J$. Let $t=(t_i)_{i\in I}$, where $t_i = s_i\restriction\dom(X)$ for $i\in I\setminus J$. Then by Lemma~\ref{Basic lemma about assignments}~\ref{Modified assignment as the sequence of modified quotient assignments}, $t(f/x) = (t_i(f(i)/x))_{i\in I} = (s_i)_{i\in I} = s$. By Lemma~\ref{Basic lemma about assignments}~\ref{Modified assignment as the equivalence class of modified assignments}, $s/\U = t(f/x)/\U = \frac{t}{\U}(\frac{f}{\U}/x)$, and as $t_i\in X_i$ for all $i\in J$, $t/\U\in X$. As $f(i)=a_i\in F_i(t_i)$ for each $i\in J$, we have $f/\U\in F(t/\U)$, whence $s/\U\in X(F/x)$ as desired.
		
		For the ``moreover'' part, suppose that
		\[
		    J_0\coloneqq\{i\in I \mid \text{$F_i(s)$ is a singleton for all $s\in X_i$} \}\in\U.
		\]
		Now for all $i\in J_0$ and $s\in X_i$, there are $a_{i,s}$ such that $F_i(s)=\{a_{i,s}\}$. Let $s\in X$. Now there are $s_i$ and $J_1\in\U$ such that $s=(s_i)_{i\in I}/\U$ and $s_i\in X_i$ for all $i\in J_1$. Now $J\coloneqq J_0\cap J_1\in\U$, and for all $i\in J$, $F_i(s_i) = \{a_{i,s_i}\}$. For $i\in I\setminus J_0$, let $a_{i,s_i}\in\M$ be arbitrary, and let $a = (a_{i,s_i})_{i\in I}$. Now for every $b=(b_i)_{i\in I}\in\prod_{i\in I}F_i(s_i)$, we have $b_i = a_i$ for all $i\in J$ and hence $b/\U = a/\U$. Thus $F(s) = \{a/\U\}$. \qedhere
	\end{enumerate}
\end{proof}

\subsection{Preservation in Ultraproducts.}	We define what it means for a team property to be (strongly) closed under ultraproducts, and what it means for an operation on team properties to preserve (strong) closure under ultraproducts. The notion of being strongly closed under ultraproducts was originally introduced by Lück in~\cite{luck2020team}.\footnote{What we have decided to call ``being strongly closed under ultraproducts'' here, is called ``being preserved in ultraproducts'' in~\cite{luck2020team}.}

\begin{definition}\quad
    \begin{enumerate}
        \item Let $\P$ be a team property with domain $D$. We say that $\P$ is \emph{closed under ultraproducts} if for all sets $I$, structures $\M_i$ and teams $X_i$ of $\M_i$ with domain $D$, $i\in I$, and ultrafilters $\U\subseteq\Pow(I)$, we have
        \[
            \{i\in I \mid (\M_i, X_i) \in \P \}\in\U \implies \left(\prod_{i\in I}\M_i/\U, \prod_{i\in I}X_i/\U \right)\in \P.
        \]

        \item We say that a team property $\P$ is \emph{strongly closed under ultraproducts} if both $\P$ and its complement $\P^c$ are closed under ultraproducts.

        \item We say that an $n$-ary operation $f$ on team properties \emph{preserves being closed under ultraproducts} if whenever $\P_i$, $i<n$, are closed under ultraproducts, also $f(\P_0,\dots,\P_{n-1})$ is. We say that $f$ \emph{preserves being strongly closed under ultraproducts} if whenever $\P_i$, $i<n$, are strongly closed under ultraproducts, also $f(\P_0,\dots,\P_{n-1})$ is.
    \end{enumerate}
\end{definition}

\noindent The following theorem was proved in \textcite[Lem. 5.33, Thm 5.36]{luck2020team}.
\begin{theorem}[Lück]\label{Atoms are strongly preserved}\quad
    \begin{enumerate}
        \item Flat team properties are strongly closed under ultraproducts.
        \item First-order team properties are strongly closed under ultraproducts. In particular, dependence, independence, inclusion and exclusion atoms are strongly closed under ultraproducts.
    \end{enumerate}
\end{theorem}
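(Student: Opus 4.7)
The plan is to reduce both claims to the classical Łoś theorem for first-order logic, using the fact that team ultraproducts interact well with the relational traces $X[\vec{x}]$. Throughout, write $\M = \prod_{i \in I} \M_i / \U$ and $X = \prod_{i \in I} X_i / \U$.

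For part (ii), let $\P$ be defined by the FO sentence $\phi(R)$ over $\tau_\P \cup \{R\}$, with domain $\{x_0, \ldots, x_{n-1}\}$. The key preparatory step is to establish the identity
\[
    \left(\prod_{i \in I} X_i / \U\right)[x_0, \ldots, x_{n-1}] = \prod_{i \in I} X_i[x_0, \ldots, x_{n-1}] / \U
\]
as $n$-ary relations on $M = \prod_{i\in I} M_i / \U$. The inclusion $\subseteq$ follows directly from \cref{Basic lemma about assignments}: any $s = (s_i)_{i\in I}/\U$ with $\{i \mid s_i \in X_i\} \in \U$ has coordinates $s(x_k) = (s_i(x_k))_{i\in I}/\U$, and $(s_i(x_0), \ldots, s_i(x_{n-1})) \in X_i[\vec{x}]$ on a $\U$-large set. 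For $\supseteq$, given $(f_0/\U, \ldots, f_{n-1}/\U)$ with $\{i \mid (f_0(i), \ldots, f_{n-1}(i)) \in X_i[\vec{x}]\} \in \U$, I would use the axiom of choice to pick witnessing assignments $s_i \in X_i$ with $s_i(x_k) = f_k(i)$ on the $\U$-large set (extending arbitrarily on the remaining coordinates of $D$ and on indices outside the set); then $s = (s_i)_{i\in I}/\U \in X$ produces the required tuple.

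With this identity in place, the $(\tau_\P \cup \{R\})$-structure $(\M, X[\vec{x}])$ coincides with the classical ultraproduct $\prod_{i\in I} (\M_i, X_i[\vec{x}]) / \U$, so the classical Łoś theorem for first-order logic yields
\[
    (\M, X) \in \P \iff (\M, X[\vec{x}]) \models \phi(R) \iff \{i \in I \mid (\M_i, X_i) \in \P\} \in \U.
\]
Strong closure is then immediate because $\P^c$ is defined by $\neg \phi(R)$, which is also first-order.

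For part (i), the strategy is to reduce to singleton teams via flatness. Observe first that $\prod_{i\in I} \{s_i\}/\U = \{(s_i)_{i\in I}/\U\}$, so ultraproducts of singleton teams remain singletons. Given $\{i \mid (\M_i, X_i) \in \P\} \in \U$, for any $s = (s_i)_{i\in I}/\U \in X$ flatness on each $\M_i$ yields $(\M_i, \{s_i\}) \in \P$ on a $\U$-large set; transferring this singleton-team condition to $(\M, \{s\}) \in \P$ and then invoking flatness on the ultraproduct side gives $(\M, X) \in \P$. The dual argument (using flatness to extract a bad singleton witness) handles $\P^c$.

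The main obstacle is the $\supseteq$ direction of the trace identity in part (ii), which forces a careful assembly of witnessing assignments on each factor together with a coherent extension to all of $D$. Once this is secured, both parts reduce cleanly to the classical Łoś theorem, modulo the flatness bookkeeping in (i).
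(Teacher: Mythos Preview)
The paper does not give its own proof of this theorem; it cites Lück~\cite{luck2020team} directly. So there is nothing to compare against, and the question reduces to whether your argument stands on its own.

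Your treatment of part~(ii) is correct and is the natural route: once the trace identity $X[\vec{x}] = \prod_{i\in I} X_i[\vec{x}]/\U$ is established, $(\M, X[\vec{x}])$ is literally the ultraproduct of the expanded structures $(\M_i, X_i[\vec{x}])$, and classical Łoś applies to $\phi(R)$ and to $\neg\phi(R)$.

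Part~(i), however, has a genuine gap. The step you label ``transferring this singleton-team condition to $(\M,\{s\})\in\P$'' is exactly what needs to be proved, and it is not true for an arbitrary flat team property in the sense of \cref{team.property}(vi). Flatness only tells you that membership in $\P$ is determined by the singleton slices; it gives you no control over how those slices behave under ultraproducts. Concretely, take $D=\{x\}$ and let
\[
    \P = \{(\M,X) \mid \dom(X)\supseteq D\ \text{and}\ (X=\emptyset\ \text{or}\ \M\ \text{is finite})\}.
\]
This $\P$ is isomorphism-closed and flat (check the defining biconditional case by case), yet it is not closed under ultraproducts: with $\M_n$ a model of size $n+2$ and $X_n$ any singleton team, each $(\M_n,X_n)\in\P$, but any non-principal ultraproduct yields an infinite model with a nonempty team. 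Your singleton-transfer step fails precisely here.

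What makes the result go through in Lück's setting is an additional hypothesis on the singleton level—effectively that the induced property of pairs $(\M,s)$ is itself Łoś-transferable, e.g.\ because it comes from a first-order formula under Tarski semantics. If that is assumed, your outline for (i) is fine; but as written, reading ``flat team property'' via \cref{team.property}(vi) alone, the argument does not close.
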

\noindent  Considering the weaker notion that we call being closed under ultraproducts, we can prove a version of Łoś' Theorem for a larger class of logics. First, we consider those operations which preserve both being closed under ultraproducts and also being strongly closed under ultraproducts.

\begin{lemma}\label{Operations strongly preserved}
    The following operations on team properties preserve both being closed and being strongly closed under ultraproducts:
    \begin{enumerate}
        \item conjunction, i.e. the binary operation
        \[
            \P\land\Q = \P\cap\Q,
        \]
        \item intuitionistic disjunction, i.e. the binary operation
        \[
            \P\ivee\Q = \P\cup\Q,
        \]
        \item universal quantifiers, i.e. the unary operations
        \[
            \forall x(\P) = \{ (\M, X) \mid (\M,X(M/x))\in\P \}
        \]
        for every $x\in D$, where $D$ is the domain of $\P$,
        \item universal $1$-quantifiers, i.e. the unary operations
        \[
            \forallone x(\P) = \{ (\M, X) \mid \text{$(\M,X(a/x))\in\P$ for all $a\in M$} \}
        \]
        for every $x\in D$, where $D$ is the domain of $\P$, and
        \item existential $1$-quantifiers, i.e. the unary operations
        \[
            \existsone x(\P) = \{ (\M, X) \mid \text{$(\M,X(a/x))\in\P$ for some $a\in M$} \}
        \]
        for every $x\in D$, where $D$ is the domain of $\P$.
    \end{enumerate}
\end{lemma}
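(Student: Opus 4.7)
The plan is to handle each operation by direct computation, using the identities of Lemma~\ref{Basic lemma about teams} to convert statements about supplemented or duplicated ultraproduct teams into statements about ultraproducts of supplemented or duplicated teams, and then invoking closure of the component team properties. The key combinatorial tool beyond this is the defining property of ultrafilters: if $A\cup B\in\U$ then $A\in\U$ or $B\in\U$ (otherwise both complements lie in $\U$, forcing $\emptyset\in\U$). Observe also that each operation in the list is either self-dual or pairs with another on the list under complementation: $(\P\land\Q)^c=\P^c\ivee\Q^c$, $(\forall x(\P))^c=\forall x(\P^c)$, and $(\forallone x(\P))^c=\existsone x(\P^c)$. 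Consequently, once ordinary closure has been established for all five operations, strong closure in each case is automatic.

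For conjunction and intuitionistic disjunction, if $\{i\mid(\M_i,X_i)\in\P\cap\Q\}\in\U$ then the same $\U$-large set witnesses membership in both $\P$ and $\Q$ individually, so closure of each gives the conclusion; dually, if $\{i\mid(\M_i,X_i)\in\P\cup\Q\}\in\U$, the ultrafilter splitting property puts one of $\{i\mid(\M_i,X_i)\in\P\}$ or $\{i\mid(\M_i,X_i)\in\Q\}$ in $\U$, and closure of that one applies. For the universal quantifier $\forall x$, I would invoke Lemma~\ref{Basic lemma about teams}~\ref{Duplication in quotient teams}, namely $(\prod_i X_i/\U)(M/x)=\prod_i X_i(M_i/x)/\U$, so that if $(\M_i,X_i(M_i/x))\in\P$ holds for $\U$-many $i$, closure of $\P$ places the ultraproduct directly into $\forall x(\P)$.

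For the $1$-quantifiers, the relevant identity is Lemma~\ref{Basic lemma about teams}~\ref{Simple supplementation in quotient teams}, $(\prod_i X_i/\U)(a/x)=\prod_i X_i(a_i/x)/\U$ for $a=(a_i)_{i\in I}/\U$. For $\forallone x(\P)$, an arbitrary $a\in M$ lifts to some sequence $(a_i)_{i\in I}$, and the universality hypothesis at $\U$-many $i$ covers each of these $a_i$, so closure of $\P$ applies to $\prod_i X_i(a_i/x)/\U$. For $\existsone x(\P)$, from $\U$-many indices $i$ carrying some $a_i\in M_i$ with $(\M_i,X_i(a_i/x))\in\P$, I would assemble a single element $a=(a_i)_{i\in I}/\U\in M$, extending arbitrarily outside the $\U$-set; closure of $\P$ then places the ultraproduct into $\existsone x(\P)$. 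The one point needing a little care is precisely this witness selection, which uses the axiom of choice on the $\U$-large index set but nothing more, and the extension off that set is harmless because $X(a/x)$ depends only on the equivalence class of $(a_i)_{i\in I}$ modulo $\U$.
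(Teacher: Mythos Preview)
Your proposal is correct, and the individual closure arguments match the paper's: both rely on Lemma~\ref{Basic lemma about teams} for the quantifier cases and elementary ultrafilter manipulations for the propositional connectives. The one genuine organizational difference is how strong closure is obtained. The paper argues each direction separately for each operation (e.g.\ for $\forall x$ it first shows closure of $\forall x(\P)$ and then, in a second pass, closure of $(\forall x(\P))^c$). You instead prove ordinary closure for all five operations and then invoke the complementation identities $(\P\land\Q)^c=\P^c\ivee\Q^c$, $(\forall x(\P))^c=\forall x(\P^c)$, and $(\forallone x(\P))^c=\existsone x(\P^c)$ to get strong closure for free. This is a real economy---it halves the casework and makes visible the structural reason the lemma holds, namely that the list is closed under taking duals. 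The paper's direct route, by contrast, produces the full biconditional in one pass for each operation, which is marginally more self-contained if one wants the strong statement in isolation.
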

\begin{proof}
    We prove the cases for conjunction and universal quantifier; the other ones are similar. We show that the operations preserve being strongly closed in ultraproducts. In each case, it is easy to see how to modify the proof so that it shows that the operation in question preserves being closed under ultraproducts.

    For the proof, let $I$ be an index set, $\M_i$ a structure and $X_i$ a team of $\M_i$ with domain $D$ for $i\in I$, and $\U$ an ultrafilter on $I$. Denote $\M\coloneqq\prod_{i\in I}\M_i/\U$ and $X\coloneqq\prod_{i\in I}X_i/\U$.

    \begin{enumerate}
        \item Suppose that $\P$ and $\Q$ are strongly closed under ultraproducts. From the properties of ultrafilters, it follows that
		\begin{align*}
		    \{i\in &I \mid (\M_i, X_i)\in\P\land\Q \}\in\U \\
		    &\iff \{i\in I \mid (\M_i, X_i)\in\P\}\cap\{i\in I \mid (\M_i, X_i)\in\Q\}\in\U \\
		    &\iff \{i\in I \mid (\M_i, X_i)\in\P\}\in\U\ \text{and}\ \{i\in I \mid (\M_i, X_i)\in\Q\}\in\U \\
		    &\iff (\M,X)\in\P\ \text{and}\ (\M,X)\in\Q \\
		    &\iff (\M,X)\in\P\land\Q.
		\end{align*}
		Hence $\P\land\Q$ is strongly closed under ultraproducts.

		\setcounter{enumi}{2}

        \item Suppose that $\P$ is strongly closed under ultraproducts. First suppose that
        \[
            \{i\in I \mid (\M_i,X_i)\in \forall x(\P)\}\in\U.
        \]
        Then $(\M_i,X_i(M_i/x))\in\P$ for all $i\in\{i\in I \mid (\M_i,X_i)\in \forall x(\P)\}$, and hence
        \[
            \{i\in I \mid (\M_i,X_i(M_i/x))\in\P \}\supseteq\{i\in I \mid (\M_i,X_i)\in \forall x(\P)\}.
        \]
        By upwards closedness of $\U$, now
        \[
            \{i\in I \mid (\M_i,X_i(M/x))\in\P \}\in\U.
        \]
        As $\P$ is closed under ultraproducts, we have $(\M,\prod_{i}X_i(M_i/x)/\U)\in\P$. By Lemma~\ref{Basic lemma about teams}~\ref{Duplication in quotient teams},
        \[
            X(M/x) = \prod_{i\in I}X_i(M_i/x)/\U,
        \]
        and thus $(\M,X(M/x))\in\P$. But this means that $(\M,X)\in\forall x(\P)$. Hence $\forall x(\P)$ is closed under ultraproducts.
		
		Then suppose that
        \[
            \{i\in I \mid (\M_i,X_i)\in (\forall x(\P))^c \}\in\U.
        \]
		Now for all $i\in\{i\in I \mid (\M_i,X_i)\notin \forall x(\P)\}$, we have $(\M_i,X_i)\notin \forall x(\P)$, i.e. $(\M_i,X_i(M_i/x))\notin\P$, whence
		\[
		    \{i\in I \mid (\M_i,X_i(M_i/x))\in\P^c \}\in\U.
		\]
		As $\P^c$ is closed under ultraproducts, we have $(\M,\prod_{i}X_i(M/x)/\U)\in\P^c$, whence by Lemma~\ref{Basic lemma about teams}~\ref{Duplication in quotient teams} we then obtain that $(\M,X(M/x))\in\P^c$. Thus $(\M,X)\in(\forall x(\P))^c$. This shows that $(\forall x(\P))^c$ is closed under ultraproducts, and thus $\forall x(\P)$ is strongly closed under ultraproducts. \qedhere

    \end{enumerate}
\end{proof}

Next, the following lemma proves that (weak) classical negation preserves being strongly closed under ultraproduct.

\begin{lemma}\label{operations.only.strong.preservation}
    The following operations on team properties preserve being strongly closed under ultraproducts:
    \begin{enumerate}
        \item the weak classical negation, i.e. the unary operation
        \[
            \cneg\P \coloneqq \{(\M,X) \mid \text{$X=\emptyset$ or $(\M,X)\notin\P$}\},
        \]
        and
        \item the classical negation, i.e. the unary operation
        \[
            \unsim\P \coloneqq \P^c.
        \]
    \end{enumerate}
\end{lemma}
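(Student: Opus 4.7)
My plan is to handle classical negation first as it is essentially trivial, then treat weak classical negation by splitting cases on whether $\U$-many teams in the ultraproduct are empty.

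For classical negation, note that $(\unsim\P)^c = \P$ and $\unsim\P = \P^c$. So $\unsim\P$ being strongly closed under ultraproducts amounts to both $\P$ and $\P^c$ being closed under ultraproducts, which is precisely the hypothesis that $\P$ is strongly closed.

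For weak classical negation, the key auxiliary observation I would establish first is the following: given teams $X_i$ of $\M_i$ on domain $D$ and an ultrafilter $\U$, the team ultraproduct $X \coloneqq \prod_{i\in I}X_i/\U$ is empty if and only if $\{i\in I \mid X_i=\emptyset\}\in\U$. The forward direction is by contraposition: if $\{i\in I \mid X_i\neq\emptyset\}\in\U$, then pick $s_i\in X_i$ for each such $i$ and arbitrary $s_i\in M_i^D$ otherwise; the class $(s_i)_{i\in I}/\U$ lies in $X$. The reverse direction follows from the definition of $\prod_{i\in I}X_i/\U$, since the ``witness set'' $\{i\in I\mid s_i\in X_i\}$ would have to be in $\U$ while being contained in $\{i\in I\mid X_i\neq\emptyset\}$, whose complement is in $\U$.

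Now suppose $\P$ is strongly closed. To show $\cneg\P$ is closed under ultraproducts, assume $J \coloneqq \{i\in I \mid (\M_i,X_i)\in\cneg\P\}\in\U$. If $\{i\in I \mid X_i=\emptyset\}\in\U$ then $X=\emptyset$ by the observation, so $(\M,X)\in\cneg\P$ trivially. Otherwise $\{i\in I \mid X_i\neq\emptyset\}\in\U$, and intersecting with $J$ shows $\{i\in I \mid (\M_i,X_i)\in\P^c\}\in\U$; since $\P^c$ is closed under ultraproducts, $(\M,X)\in\P^c$, and as $X$ need not be empty this yields $(\M,X)\in\cneg\P$ directly. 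For the complement, assume $\{i\in I \mid (\M_i,X_i)\in(\cneg\P)^c\}\in\U$; since $(\cneg\P)^c = \{(\M,X)\mid X\neq\emptyset\text{ and }(\M,X)\in\P\}$, both $\{i\in I \mid X_i\neq\emptyset\}\in\U$ and $\{i\in I \mid (\M_i,X_i)\in\P\}\in\U$. Closure of $\P$ under ultraproducts yields $(\M,X)\in\P$, while the first set being in $\U$ together with the auxiliary observation yields $X\neq\emptyset$, so $(\M,X)\in(\cneg\P)^c$.

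The only mildly subtle step is the case distinction on $\{i\in I \mid X_i=\emptyset\}\in\U$ versus its complement; everything else is bookkeeping with ultrafilter closure properties. Once the empty-team auxiliary lemma is in place, the rest is routine.
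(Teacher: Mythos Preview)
Your proof is correct and follows essentially the same route as the paper: both arguments hinge on the observation that $\prod_{i\in I}X_i/\U=\emptyset$ iff $\{i\mid X_i=\emptyset\}\in\U$, which you isolate as an explicit auxiliary lemma while the paper uses it inline. The only cosmetic difference is that you split cases on whether $\{i\mid X_i=\emptyset\}\in\U$, whereas the paper splits on whether $X=\emptyset$ directly---but by your own lemma these case splits coincide.
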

\begin{proof}
    For the proof of preserving being strongly closed under ultraproducts, let $I$ be an index set, $\M_i$ a structure and $X_i$ a team of $\M_i$ with domain $D$ for $i\in I$, and $\U$ an ultrafilter on $I$. Denote $\M\coloneqq\prod_{i\in I}\M_i/\U$ and $X\coloneqq\prod_{i\in I}X_i/\U$.
    \begin{enumerate}
        \item Suppose that $\P$ is strongly closed under ultraproducts and that
        \[
            J_0 \coloneqq \{i\in I \mid (\M_i,X_i)\in\cneg\P\}\in\U.
        \]
        Now if $X = \emptyset$, we have $(\M,X)\in\cneg\P$ and we are done, so suppose that $X\neq\emptyset$. Then there is $J_1\in\U$ such that for all $i\in J_1$, $X_i\neq\emptyset$. Now $J\coloneqq J_0\cap J_1\in\U$, and for all $i\in J$, $(\M_i,X_i)\in\P^c$. As $\P^c$ is closed under ultraproducts, it follows that $(\M,X)\in\P^c$, whence $(\M,X)\in\cneg\P$. Thus $\cneg\P$ is closed under ultraproducts.
        
        Then suppose that
        \[
            J \coloneqq \{i\in I \mid (\M_i,X_i)\in(\cneg\P)^c\}\in\U.
        \]
        Now for all $i\in J$, $\emptyset\neq X_i\in\P$. But then $X\neq\emptyset$ and as $\P$ is closed under ultraproducts, $(\M,X)\in\P$. Hence $(\M,X)\in(\cneg\P)^c$, and $(\cneg\P)^c$ is closed under ultraproducts. This shows that $\cneg\P$ is strongly closed under ultraproducts.

        \item If $\P$ is strongly closed under ultraproducts, then, just by definition, $\unsim \P$ also is.  \qedhere
    \end{enumerate}
\end{proof}

\noindent To see that neither $\cneg$ nor $\unsim$ preserves being closed under ultraproducts, consider the following example.

\begin{example}\label{cneg.example}
     Fix the signature $\tau=\{ < \}$ and define $\P_\omega$ with domain $\{x\}$ as follows. Denote by $T$ the first-order $\tau$-theory of $\mathbb{N}$. For any pair $(\N, X)$, we let $(\N, X)\in \P_\omega$ if either $\N\nmodels T$, or $\N\models T$ and every $a\in X[x]$ is a non-standard number, meaning that there are infinitely many elements strictly less than $a$. Now, notice that the property of being non-standard can be expressed in first-order logic by an infinite set of formulas $\psi_n(x)$ for $n<\omega$, where
    \[
     \psi_n = \exists x_0 \dots \exists x_{n-1} \left(\bigwedge_{i<j<n} \neg x_i=x_j \land  \bigwedge_{i<n} x_i < x \right).
     \]
     It thus follows immediately by Łoś' Theorem for first-order logic that $\P_\omega$ is closed under ultraproducts. However, consider now the team property $\unsim\P_\omega$. Let $\mathbb{N}^\omega/\U$ be an ultrapower of the natural numbers by a non-principal ultrafilter $\U$, and, for each $n<\omega$, let $s_n\colon\{x\}\to\mathbb{N}$ be the assignment $x\mapsto n$. Then, we have that $\{n<\omega \mid (\mathbb{N}, \{s_n\})\in\unsim\P_\omega \} = \omega \in\U$. However, we also obtain $(\mathbb{N}^\omega/\U,\prod_{n<\omega} \{s_n\}/\U)\notin \unsim\P_\omega$, as the element $(n)_{n<\omega}/\U$ is non-standard. The case of $\cneg\P$ is handled analogously.
\end{example}

Finally, we consider those operations which only preserve being closed under ultraproducts.

\begin{lemma}\label{Operations preserved}
    The following operations on team properties preserve being closed under ultraproducts:
    \begin{enumerate}
        \item tensor disjunction with lax semantics, i.e. the binary operation
        \[
            \P\vee\Q = \{ (\M, X\cup Y) \mid (\M,X)\in\P, (\M,Y)\in\Q \},
        \]
        \item tensor disjunction with strict semantics, i.e. the binary operation
        \[
            \P\svee\Q = \{ (\M, X\cup Y) \mid (\M,X)\in\P, (\M,Y)\in\Q, X\cap Y=\emptyset \},
        \]
        \item existential quantifiers with lax semantics, i.e. the unary operations
        \[
            \exists x(\P) = \{ (\M, X) \mid \text{$(\M,X(F/x))\in\P$ for some $F\colon X\to\Pow^+(M)$} \}
        \]
        for every $x\in D$, where $D$ is the domain of $\P$, and
        \item existential quantifiers with strict semantics, i.e. the unary operations
        \[
            \sexists x(\P) = \{ (\M, X) \mid \text{$(\M,X(F/x))\in\P$ for some $F\colon X\to M$} \}
        \]
        for every $x\in D$, where $D$ is the domain of $\P$.
    \end{enumerate}
\end{lemma}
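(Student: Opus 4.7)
The plan is to exploit Lemma~\ref{Basic lemma about teams} to transport existential witnesses---team decompositions in the case of tensor disjunction, supplement functions in the case of existential quantification---from a $\U$-large set of factor indices to the ultraproduct itself. Fix throughout an index set $I$, structures $\M_i$ with teams $X_i$ over a common domain $D$ for $i\in I$, and an ultrafilter $\U$ on $I$; denote $\M\coloneqq\prod_{i\in I}\M_i/\U$ and $X\coloneqq\prod_{i\in I}X_i/\U$.

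For tensor disjunction with lax semantics, assume $\P$ and $\Q$ are closed under ultraproducts and let $J\coloneqq\{i\in I \mid (\M_i,X_i)\in\P\vee\Q\}\in\U$. For each $i\in J$ pick a decomposition $X_i=Y_i\cup Z_i$ with $(\M_i,Y_i)\in\P$ and $(\M_i,Z_i)\in\Q$; for $i\notin J$ let $Y_i=Z_i=X_i$. Setting $Y\coloneqq\prod_{i\in I}Y_i/\U$ and $Z\coloneqq\prod_{i\in I}Z_i/\U$, Lemma~\ref{Basic lemma about teams}~\ref{Tensoring in quotient teams} yields $X=Y\cup Z$, whilst closure of $\P$ and $\Q$ gives $(\M,Y)\in\P$ and $(\M,Z)\in\Q$, so $(\M,X)\in\P\vee\Q$. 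For the strict variant $\svee$ one additionally requires $Y_i\cap Z_i=\emptyset$ for $i\in J$, and the ``moreover'' clause of the same item delivers $Y\cap Z=\emptyset$.

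For the existential quantifier with lax semantics, assume $\P$ is closed under ultraproducts and let $J\coloneqq\{i\in I \mid (\M_i,X_i)\in\exists x(\P)\}\in\U$. For each $i\in J$ pick a supplement function $F_i\colon X_i\to\Pow^+(M_i)$ witnessing $(\M_i,X_i(F_i/x))\in\P$, and take an arbitrary $F_i$ otherwise. Defining $F\colon X\to\Pow^+(M)$ by
\[
    F\bigl((s_i)_{i\in I}/\U\bigr) \coloneqq \{f/\U \mid f\in\textstyle\prod_{i\in I}F_i(s_i)\}
\]
as in Lemma~\ref{Basic lemma about teams}~\ref{Supplementation in quotient teams}, that same clause gives $X(F/x)=\prod_{i\in I}X_i(F_i/x)/\U$, whence closure of $\P$ under ultraproducts yields $(\M,X(F/x))\in\P$ and therefore $(\M,X)\in\exists x(\P)$. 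The strict variant $\sexists$ is handled identically: if each $F_i$ is chosen to be a function into $M_i$, the ``moreover'' clause ensures each $F(s)$ is a singleton.

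Once Lemma~\ref{Basic lemma about teams} is in hand, the calculations are essentially a book-keeping exercise and present no real technical obstacle. What is conceptually worth emphasising is \emph{why} these cases do not also yield strong closure under ultraproducts: the witnesses extracted (decompositions, supplement functions) are existential in nature, built only from those factors where the property holds, and this asymmetry has no counterpart for the complementary property. This is precisely the phenomenon separating the operations of this lemma from those of Lemma~\ref{Operations strongly preserved} and Lemma~\ref{operations.only.strong.preservation}.
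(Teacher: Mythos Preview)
Your proof is correct and follows essentially the same route as the paper's: choose witnesses on the $\U$-large set $J$, extend arbitrarily off $J$, and invoke Lemma~\ref{Basic lemma about teams} to transport the decomposition or supplement to the ultraproduct. The only cosmetic difference is that the paper fixes a specific default ($Y_i=Z_i=X_i$, respectively $F_i(s)=M_i$) for $i\notin J$, whereas you say ``arbitrary''; either works since only $\U$-many indices matter.
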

\begin{proof}
    For the proof, let $I$ be an index set, $\M_i$ a structure and $X_i$ a team of $\M_i$ with domain $D$ for $i\in I$, and $\U$ an ultrafilter on $I$. Denote $\M\coloneqq\prod_{i\in I}\M_i/\U$ and $X\coloneqq\prod_{i\in I}X_i/\U$.
    \begin{enumerate}
        \item Suppose that $\P$ and $\Q$ are closed under ultraproducts, and that
        \[
            J\coloneqq\{i\in I \mid (\M_i,X_i)\in\P\vee\Q\}\in\U.
        \]
        In order to show that $(\M,X)\in\P\vee\Q$, we need to find teams $Y$ and $Z$ of $\M$ such that $Y\cup Z = X$ and $(\M,Y)\in\P$ and $(\M,Z)\in\Q$. Now, for all $i\in J$, the team $X_i$ can be divided into two teams $Y_i$ and $Z_i$ of $\M_i$ such that $(\M_i,Y_i)\in\P$ and $(\M_i,Z_i)\in\Q$. For $i\in I\setminus J$, let $Y_i = Z_i = X_i$ and let $Y=\prod_{i\in I}Y_i/\U$ and $Z=\prod_{i\in I}Z_i/\U$. We show that these $Y$ and $Z$ suffice.
		
		Clearly $Y$ and $Z$ are teams of $\M$ with domain $D$, and by Lemma~\ref{Basic lemma about teams}~\ref{Tensoring in quotient teams} $X=Y\cup Z$. Also, since for any $i\in J$ we have $(\M_i,Y_i)\in\P$ and $(\M_i,Z_i)\in\Q$, we get
		\[
		    \{i\in I \mid (\M_i,Y_i)\in\P\}\supseteq J\ \text{and}\ \{i\in I \mid (\M_i,Z_i)\in\Q\}\supseteq J,
		\]
		whence
		\[
		    \{i\in I \mid (\M_i,Y_i)\in\P\}\in\U\ \text{and}\ \{i\in I \mid (\M_i,Z_i)\in\Q\}\in\U.
		\]
		Then, as $\P$ and $\Q$ are closed under ultraproducts, we have $(\M,Y)\in\P$ and $(\M,Z)\in\Q$, as desired.
		
		\item In the proof for the lax tensor, insert the assumption of disjointness of $Y_i$ and $Z_i$, and Lemma~\ref{Basic lemma about teams}~\ref{Tensoring in quotient teams} ensures that also $Y$ and $Z$ are disjoint. This yields a proof for the strict tensor.
		
		\item Suppose that $\P$ is closed under ultraproducts, and that
        \[
            J\coloneqq\{i\in I \mid (\M_i,X_i)\in \exists x(\P)\}\in\U.
        \]
		Now for all $i\in J$, we have $(\M_i,X_i(F_i/x))$ for some function $F_i\colon X_i\to \Pow^+(\M_i)$. For $i\in I\setminus J$, let $F_i(s)=M$ for all $s\in X_i$. We then have that
		\[
		    \{i\in I \mid (\M_i,X_i(F_i/x))\in\P\}\supseteq J
		\]
		and hence
		\[
		    \{i\in I \mid (\M_i,X_i(F_i/x))\in\P\}\in\U.
		\]
		As $\P$ is closed under ultraproducts, we have  $(\M,\prod_{i}X_i(F_i/x))\in\P$. By Lemma~\ref{Basic lemma about teams}~\ref{Supplementation in quotient teams} we then obtain that $(\M,X(F/x))\in\P$, where $F$ is defined as in the statement of the lemma. It follows that $(\M,X)\in\exists x(\P)$, which proves our claim.
		
		\item In the proof for lax existential quantifier, insert the assumption that each $F_i$ maps every assignment to a singleton, and Lemma~\ref{Basic lemma about teams}~\ref{Supplementation in quotient teams} ensures that also $F$ maps every assignment to a singleton. This yields a proof for the strict existential quantifier. \qedhere
    \end{enumerate}
\end{proof}

\subsection{Łoś' Theorem and Compactness.} We are now in a place to prove a version of Łoś' Theorem for several extensions of first-order logic via team semantics, and thus derive compactness. We first define what it means for a logic to have a Łoś' Theorem.

\begin{definition}
    We say that a logic $\mathcal{L}$ has a \emph{Łoś' Theorem} if the following holds.
    \begin{quote}
        Let $I$ be a set, for each $i\in I$ let $\M_i$ be a $\tau$-structure and $X_i$ a team of $\M_i$ with a shared domain $D$, and let $\U$ be an ultrafilter on $I$. Denote $\M \coloneqq \prod_{i\in I}\M_i/\U$ and $X \coloneqq \prod_{i\in I}X_i/\U$. Then, given any $\tau$-formula $\phi$ of the logic $\mathcal{L}$ such that the free variables of $\phi$ are contained in $D$, we have
		\[
    		\{i\in I \mid \M_i\models_{X_i}\phi\}\in\U \implies \M\models_X \phi.
		\]
    \end{quote}
    We say that $\mathcal{L}$ has a \emph{strong Łoś' Theorem} if the above holds in both directions:
		\[
    		\{i\in I \mid \M_i\models_{X_i}\phi\}\in\U \iff \M\models_X \phi.
		\]
\end{definition}

\noindent We can then derive from our previous results the following propositions, which correspond to a version of Łoś' Theorem in our context.

\begin{proposition}\label{Abstract Los}
    Let $\mathcal{L}$ be a team-semantic extension of first-order logic whose atomic formulas are (strongly) closed under ultraproducts and whose logical operations preserve being (strongly) closed under ultraproducts. Then $\mathcal{L}$ has a (strong) Łoś' Theorem.
\end{proposition}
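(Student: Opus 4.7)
The plan is a straightforward induction on the construction of formulas, reformulated in the language of team properties. Given a formula $\phi$ of $\mathcal{L}$ whose free variables are contained in the fixed domain $D$, associate to $\phi$ the team property $\llbracket \phi \rrbracket_{\mathcal{L}}$ with domain $D$ defined by $(\M,X)\in\llbracket\phi\rrbracket_{\mathcal{L}}$ iff $\M\models_X\phi$ (extending the domain trivially beyond $\Fv(\phi)$ when necessary). Observe that the statement to be proven amounts exactly to saying that $\llbracket\phi\rrbracket_{\mathcal{L}}$ is closed under ultraproducts (respectively strongly closed, in the strong case): indeed, the left-hand side of the Łoś implication is $\{i \mid (\M_i,X_i)\in\llbracket\phi\rrbracket_{\mathcal{L}}\}\in\U$ and the right-hand side is $(\M,X)\in\llbracket\phi\rrbracket_{\mathcal{L}}$.

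I would then proceed by structural induction on $\phi$. The base case, where $\phi$ is atomic (including negated first-order atoms and whichever dependency atoms belong to the syntax of $\mathcal{L}$), is handled directly by the hypothesis that atomic team properties of $\mathcal{L}$ are (strongly) closed under ultraproducts. For the inductive step, suppose $\phi$ is built from subformulas $\psi_0,\dots,\psi_{n-1}$ by a logical operation $\star$; by the induction hypothesis, each $\llbracket\psi_j\rrbracket_{\mathcal{L}}$ is (strongly) closed under ultraproducts, and by hypothesis the operation $\star$ (viewed as an operation on team properties, exactly as formalized in Lemmas \ref{Operations strongly preserved}, \ref{operations.only.strong.preservation} and \ref{Operations preserved}) preserves being (strongly) closed under ultraproducts. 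The key compatibility to check is that $\llbracket\phi\rrbracket_{\mathcal{L}} = \star(\llbracket\psi_0\rrbracket_{\mathcal{L}},\dots,\llbracket\psi_{n-1}\rrbracket_{\mathcal{L}})$, but this holds by unravelling the team-semantic clause for $\star$ against the set-theoretic definition of the corresponding operation on team properties.

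The main (minor) obstacle is book-keeping concerning domains of team properties, since subformulas may have fewer free variables than $\phi$ and quantifier clauses alter the domain by adding a variable. This is resolved by once and for all fixing a common domain $D\supseteq\Fv(\phi)$ for all team properties appearing in the induction, and noting that the operations of Lemmas \ref{Operations strongly preserved}--\ref{Operations preserved} are stated uniformly in the domain. Once this identification is in place, the inductive step is immediate from the assumptions, and in the strong case one applies the biconditional versions of the preservation lemmas to obtain the converse direction of Łoś' Theorem simultaneously.
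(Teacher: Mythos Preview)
Your proposal is correct and takes exactly the same approach as the paper: the paper's proof consists of the two words ``Trivial induction,'' and your sketch simply unpacks this structural induction on formulas, with the base case given by the hypothesis on atoms and the inductive step by the hypothesis on operations. Your remarks on domain bookkeeping are a reasonable elaboration but not something the paper addresses explicitly.
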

\begin{proof}
    Trivial induction.
\end{proof}

\begin{corollary}\label{team.logic.corollary} $\;$
    \begin{enumerate}
        \item Let $C\subseteq\{\dep(\dots), \perp_c, \subseteq, |\}$. Then $\fol(C)$, both with strict and lax semantics, has a Łoś' Theorem.
        \item The fragment of $\foil$ that contains neither the tensor disjunction $\lor$ nor the existential quantifier $\exists$ has a strong Łoś' Theorem.
    \end{enumerate}
\end{corollary}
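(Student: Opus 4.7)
Both parts are immediate applications of \cref{Abstract Los}. For each logic in question, we need only verify that its atomic formulas are (strongly) closed under ultraproducts and that each of its logical operations preserves being (strongly) closed under ultraproducts. Since all the required preservation results are already recorded in the preceding lemmas, the proof reduces to bookkeeping, and there is no genuine obstacle.

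For part (i), fix $C \subseteq \{\dep(\dots), \perp_c, \subseteq, |\}$. The atomic formulas of $\fol(C)$ are the first-order atoms, the negated first-order atoms, and the dependency atoms from $C$. By \cref{Atoms are strongly preserved}, all of these are strongly closed, and hence closed, under ultraproducts. Under lax semantics, the logical operations are $\land$, $\lor$, $\exists$, and $\forall$, while under strict semantics they are $\land$, $\svee$, $\sexists$, and $\forall$. By \cref{Operations strongly preserved}, conjunction and universal quantification preserve being closed, and by \cref{Operations preserved}, both variants of the tensor disjunction and both variants of the existential quantifier preserve being closed. \cref{Abstract Los} then yields a Łoś' Theorem for $\fol(C)$ under either semantics.

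For part (ii), the logic in question has as its atoms the first-order atoms, the negated first-order atoms, and the independence atom $\perp_c$, while its only logical operations are $\land$ and $\forall$. By \cref{Atoms are strongly preserved}, all of these atoms are strongly closed under ultraproducts, and by \cref{Operations strongly preserved}, both $\land$ and $\forall$ preserve being strongly closed. A single application of \cref{Abstract Los} then delivers the strong Łoś' Theorem. The reason part (ii) must exclude $\lor$ and $\exists$ is precisely that \cref{Operations preserved} only secures preservation of being closed under ultraproducts, not of being strongly closed, for these connectives.
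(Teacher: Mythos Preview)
Your proof is correct and follows exactly the same approach as the paper's own proof, which simply cites \cref{Abstract Los} together with \cref{Atoms are strongly preserved}, \cref{Operations strongly preserved}, \cref{operations.only.strong.preservation}, and \cref{Operations preserved}. Your version is in fact more explicit and more carefully organized than the paper's one-line justification, and your omission of \cref{operations.only.strong.preservation} is appropriate since neither $\cneg$ nor $\unsim$ occurs in the logics under consideration.
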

\begin{proof}
    After~\cref{Abstract Los}, follows from \cref{Atoms are strongly preserved,Operations strongly preserved,operations.only.strong.preservation,Operations preserved}.
\end{proof}

Finally, we obtain a compactness theorem for formulas of a whole family of extensions of first-order logic under team semantics. The following proof of compactness from Łoś' Theorem is standard and can be found e.g. in \cite{Chang-Keisler}.

\begin{theorem}[Compactness]
	Let $\mathcal{L}$ be a logic with a Łoś' Theorem, and let $\Gamma$ be a set of formulas of $\mathcal{L}$. If every finite subset $\Gamma_0$ of $\Gamma$ is satisfiable, then $\Gamma$ is satisfiable. In particular, $\fol(C)$ is compact,  both with strict and lax semantics, whenever $C\subseteq\{\dep(\dots), \perp_c, \subseteq, |\}$.
\end{theorem}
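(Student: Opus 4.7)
The plan is to run the classical ultraproduct proof of compactness, using the Łoś' Theorem hypothesized for $\mathcal{L}$. Let $I$ be the family of all finite subsets of $\Gamma$; by assumption, for each $i \in I$ there is a structure $\M_i$ and a nonempty team $X_i$ of $\M_i$ with $\M_i \models_{X_i} i$. Letting $D$ be the set of variables that occur free somewhere in $\Gamma$, we first arrange that every $X_i$ has domain exactly $D$, for instance by extending each assignment of $X_i$ with arbitrarily chosen values on the variables not already in its domain; for the logics of Corollary~\ref{team.logic.corollary} this preserves satisfaction by locality, and a short inductive check handles the strict-semantic variants as well.

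Next, for each $\phi \in \Gamma$ set $A_\phi \coloneqq \{i \in I : \phi \in i\}$. Since $\{\phi_0,\dots,\phi_{n-1}\} \in A_{\phi_0}\cap\cdots\cap A_{\phi_{n-1}}$, the family $\{A_\phi : \phi \in \Gamma\}$ has the finite intersection property and therefore extends to an ultrafilter $\U$ on $I$. Form the ultraproduct $\M \coloneqq \prod_{i \in I}\M_i/\U$ together with the team ultraproduct $X \coloneqq \prod_{i \in I} X_i/\U$ over the common domain $D$. Since each $X_i$ is nonempty, picking $s_i \in X_i$ for each $i$ yields $(s_i)_{i \in I}/\U \in X$, so $X \neq \emptyset$, which is essential to avoid the trivialisation caused by the empty team property.

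It only remains to verify $\M \models_X \phi$ for every $\phi \in \Gamma$. Fix such $\phi$: whenever $i \in A_\phi$ we have $\phi \in i$ and hence $\M_i \models_{X_i} \phi$, so $\{i \in I : \M_i \models_{X_i} \phi\} \supseteq A_\phi \in \U$, and the Łoś' Theorem for $\mathcal{L}$ delivers $\M \models_X \phi$. Thus $\M \models_X \Gamma$ on the nonempty team $X$, proving $\Gamma$ is satisfiable. The "in particular" clause is then immediate from Corollary~\ref{team.logic.corollary}. All the conceptual work has already been discharged in the preservation lemmas upstream; granted Łoś' Theorem, the only place in the present argument that requires genuine care is the initial bookkeeping step of aligning the teams $X_i$ to a common domain, since Łoś' Theorem is formulated for teams sharing a fixed domain.
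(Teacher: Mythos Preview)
Your proof is correct and follows exactly the paper's route: index by finite subsets of $\Gamma$, build an ultrafilter from the sets $A_\phi$ via the finite intersection property, form the ultraproduct of structures and of teams, and apply the hypothesised Łoś' Theorem. Your explicit attention to the domain-alignment step---arranging a shared domain $D$ for all the $X_i$ before forming the team ultraproduct, via locality in the lax case and a direct induction in the strict case---is a detail the paper's own proof leaves implicit.
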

\begin{proof}
    We prove only the first claim as the second claim follows immediately from the first and \cref{team.logic.corollary}.  Let $\mathcal{S}$ be the set of finite subsets of $\Gamma$. By the assumption, for any $S\in\mathcal{S}$ there is a model $\M_S$ and a nonempty team $X_S$ such that $\M_S\models_{X_S} S$. For $\phi\in\Gamma$, define $[\phi] \coloneqq \{S\in\mathcal{S} \mid \phi\in S\}$ and let $F = \{ [\phi] \mid \phi\in\Gamma\}$. Now $F$ has the finite intersection property: if $\phi_0,\dots,\phi_{n-1}\in\Gamma$, then
    \[
        \bigcap_{i<n}[\phi_i] = \{S\in\mathcal{S} \mid \phi_0,\dots,\phi_{n-1}\in S\}\ni\{\phi_0,\dots,\phi_{n-1}\},
    \]
    and hence $\bigcap_{i<n}[\phi_i]\neq\emptyset$. Let $\U$ be an ultrafilter on $\mathcal{S}$ extending $F$. Let
    \[
        \M = \prod_{S\in\mathcal{S}}\M_S/\U \quad\text{and}\quad X = \prod_{S\in\mathcal{S}}X_S/\U.
    \]
    Now for any $\phi\in\Gamma$, we have $\{S\in\mathcal{S} \mid \M_S\models_{X_S}\phi\}\supseteq [\phi]\in\U$, whence $\{S\in\mathcal{S} \mid \M_S\models_{X_S}\phi\}\in\U$. Then by Łoś' Theorem, $\M\models_X\phi$ for all $\phi\in\Gamma$, i.e. $\M\models_X\Gamma$. As $X$ is nonempty, this shows that $\Gamma$ is satisfiable.
\end{proof}

\section{Compactness via $\eso$-Translation}\label{compactness-by-translation}

In this section, we provide a proof of compactness for several logics over team semantics that generalizes the proof of \textcite{https://doi.org/10.48550/arxiv.1904.08695} for compactness for sets of formulas with only countably many free  variables. Even though we also have a proof for compactness using ultraproducts, we believe the proof ideas presented here may be of separate interest. We are grateful to Tapani Hyttinen for suggesting this approach for the proof. 

We shall assume in this section that $\mathcal{L}$ is always a logic over team semantics which is local and admits a translation to $\eso$ in the sense of \cref{translation} -- it is clear that independence logic and its fragments are examples of such logics. 

First, we fix some preliminary notation and terminology.
\begin{definition}\label{notation}
    Suppose $\Gamma$ is a set of $\tau$-formulas of  $\mathcal{L}$ and let $x_i$, $i<\kappa$, enumerate the free variables occurring in formulas of $\Gamma$.
    \begin{enumerate}
        \item Given $\phi\in\Gamma$, we denote by $I_\phi$ the set of all indices $i\in\kappa$ such that $x_i$ occurs free in $\phi$.

        \item Given $I\subseteq\kappa$, let $\Gamma_I=\{ \phi \in \Gamma \mid I_\phi\subseteq I \}$ be the set of formulas whose free variables are indexed by elements of $I$.

        \item For each $\phi\in\Gamma$, we let $R_\phi$ be a fresh $|I_\phi|$-ary relation symbol. We denote by $\chi_\phi(R_\phi)$ a translation of $\phi$ to $\eso$ of the form $\exists R^1_\phi \dots \exists R^{n-1}_\phi \alpha_\phi$ where $\alpha_\phi$ is a first-order sentence in the vocabulary $\tau\cup\{R_\phi, R^0_\phi,\dots,R^{n-1}_\phi\}$ and both $R_\phi$ and each $R^i_\phi$ are fresh.

        \item For every finite $I\subseteq \kappa$, we let $S_I$ be a fresh $|I|$-ary predicate symbol and
        \[
            \tau_\Gamma = \tau\cup\{R_\phi \mid \phi\in \Gamma\}\cup \{S_I \mid I\subseteq \kappa, |I|<\omega \}.
        \]
        We write $S_I((x_i)_{i\in I})$ for the formula $S_I(x_{i_0},\dots,x_{i_{n-1}})$, where $i_0,\dots,i_{n-1}$ is an enumeration of $I$ in increasing order.
        
        \item We let $\Delta_\Gamma$ be the following set of $\tau_\Gamma$-sentences:
        \begin{itemize}
            \item $\exists \vec{v} R_\phi(\vec{v})$,
            \item $\forall(x_i)_{i\in I_\phi} (R_\phi((x_i)_{i\in I\phi}) \leftrightarrow S_{I_\phi}((x_i)_{i\in I_\phi}))$, and
            \item $\forall(x_i)_{i\in I}(S_I((x_i)_{i\in I}) \leftrightarrow \exists(x_i)_{i\in J\setminus I} S_{J}((x_i)_{i\in J}))$,
        \end{itemize}
        where $I \subseteq J\subseteq \kappa$, $I$ and $J$ are finite, and $\phi\in\Gamma$.
    \end{enumerate}
\end{definition}

\noindent The next lemma makes explicit the motivation behind the choice of $\Delta_\Gamma$.

\begin{lemma}\label{intuition}
    Let $\M$ be a $\tau_T$-structure. For $\phi\in\Gamma$ and $I\subseteq\kappa$, we denote
    \begin{align*}
            X^\M_\phi &= \{s\colon \{x_i \mid i\in I_\phi\} \to \M \mid (s(x_i))_{i\in I_\phi}\in R_\phi^\M \} \text{ and} \\
            Y^\M_I &= \{s\colon \{x_i \mid i\in I\}\to\M \mid (s(x_i))_{i\in I}\in S_I^\M\}.
        \end{align*}
    Then $\M\models\Delta_\Gamma$ if and only if the following hold.
    \begin{enumerate}
        \item $X^\M_\phi \neq \emptyset$ for all $\phi\in\Gamma$.
        \item $X^\M_\phi = Y^\M_{I_\phi}$ for all $\phi\in\Gamma$.
        \item For all finite $I,J\subseteq\kappa$, if $I\subseteq J$, then $Y^\M_I = Y^\M_J\restriction I$.
    \end{enumerate}
\end{lemma}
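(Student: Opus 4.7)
The proof is essentially a direct unpacking of the three sentence schemes comprising $\Delta_\Gamma$, showing that each scheme corresponds precisely to one of the three stated conditions. The plan is to prove both directions of each correspondence simultaneously by observing that each biconditional already has ``iff'' form.

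First, I would handle condition (i): the sentence $\exists \vec{v}\, R_\phi(\vec{v})$ holds in $\M$ if and only if $R_\phi^\M \neq \emptyset$, which is equivalent to $X_\phi^\M \neq \emptyset$ by the definition of $X_\phi^\M$. So $\M\models\exists \vec{v}\, R_\phi(\vec{v})$ for every $\phi\in\Gamma$ iff (i) holds.

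Next, for condition (ii), the sentence $\forall(x_i)_{i\in I_\phi}\bigl(R_\phi((x_i)_{i\in I_\phi}) \leftrightarrow S_{I_\phi}((x_i)_{i\in I_\phi})\bigr)$ holds in $\M$ iff $R_\phi^\M = S_{I_\phi}^\M$ as subsets of $M^{|I_\phi|}$. Translating to assignments via the bijection $s \mapsto (s(x_i))_{i\in I_\phi}$, this is exactly the statement $X_\phi^\M = Y_{I_\phi}^\M$. Doing this for every $\phi\in\Gamma$ yields (ii).

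Finally, for condition (iii), the sentence $\forall(x_i)_{i\in I}\bigl(S_I((x_i)_{i\in I}) \leftrightarrow \exists(x_i)_{i\in J\setminus I}\, S_J((x_i)_{i\in J})\bigr)$ holds in $\M$ iff a tuple $(a_i)_{i\in I}$ lies in $S_I^\M$ precisely when it can be extended by some $(a_i)_{i\in J\setminus I}$ to a tuple in $S_J^\M$. Passing to assignments, this says that $s\in Y_I^\M$ iff there exists $s'\in Y_J^\M$ with $s'\restriction\{x_i \mid i\in I\} = s$, which is exactly the condition $Y_I^\M = Y_J^\M \restriction I$ in the notation of the paper. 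Quantifying over all finite $I\subseteq J\subseteq \kappa$ gives (iii).

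The proof is entirely notational and carries no real obstacle; the only point requiring a little care is matching the projection clause in the third sentence scheme with the team-restriction notation $Y_J^\M\restriction I$, which is really $Y_J^\M\restriction\{x_i\mid i\in I\}$ under the convention introduced before the Fact on locality. Once this identification is made, the lemma follows by a straightforward conjunction of the three equivalences above.
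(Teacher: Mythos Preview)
Your proposal is correct and follows essentially the same approach as the paper: both proofs unpack each of the three sentence schemas in $\Delta_\Gamma$ and show that each is equivalent, via the obvious translation between tuples and assignments, to the corresponding condition (i), (ii), or (iii). The paper's proof is written out with exactly the same chain of equivalences you describe, including the identification of the projection clause in the third schema with the team restriction $Y_J^\M\restriction\{x_i\mid i\in I\}$.
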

\begin{proof} 
    We associate the properties above with the schemas of formulas from $\Delta_\Gamma$. First,
    \begin{align*}
        \M\models\exists \vec{v} R_\phi(\vec{v}) &\iff R_\phi^\M\neq\emptyset \iff X_\phi^\M\neq\emptyset.
    \end{align*}
    Second,
    \begin{align*}
        \M\models\forall(x_i)_{i\in I_\phi} (R_\phi((x_i)_{i\in I\phi}) \leftrightarrow S_{I_\phi}((x_i)_{i\in I_\phi})) &\iff R_\phi^\M = S_{I_\phi}^\M \iff X_\phi^\M = Y^\M_{I_\phi}.
    \end{align*}
    Third, let $I,J\subseteq\kappa$ be finite, $I\subseteq J$. Then
    \begin{align*}
        &\M\models\forall(x_i)_{i\in I}(S_I((x_i)_{i\in I}) \leftrightarrow \exists(x_i)_{i\in J\setminus I} S_{J}((x_i)_{i\in J})) \\
        \iff {}& S_I^\M = \{(a_i)_{i\in I}\in\M^I \mid \exists (a_i)_{i\in J\setminus I}\ (a_i)_{i\in J}\in S_J^\M\} \\
        \iff {}& Y_I^\M = \{s\colon\{x_i \mid i\in I\}\to\M \mid \exists s'\in Y_J^\M\ (s'\restriction\{x_i \mid i\in I\} = s)\} \\
        \iff {}& Y_I^\M = Y_J^\M\restriction\{x_i \mid i\in I\},
    \end{align*}
    \noindent proving our claim.
\end{proof}

Informally, the above lemma states that if $\M\models\Delta_\Gamma$, then there is a coherent directed system of teams $(Y^\M_I)_{I\subseteq \kappa, |I|<\omega}$ such that when restricted to the free variables of any $\phi\in\Gamma$, $Y^\M_I[\Fv(\phi)]$ is a relation that we would like to satisfy the $\eso$-translation of $\phi$. Hence $\Delta_\Gamma$ allows us to attempt to merge these teams into a single team with domain $\{x_i \mid i<\kappa\}$ that would satisfy the whole of $\Gamma$.

The proof of compactness using the translation into $\eso$ consists of two steps. The first step is finding a single model $\M$ and, for every finite subset $\Gamma_0$ of $\Gamma$, a team $X_0$ such that $\M\models_{X_0}\Gamma_0$. The second step consists in merging all the teams satisfying the finite subsets of $\Gamma$ to find one team satisfying $\Gamma$ itself. The following lemma provides us with the first step and follows exactly as in \cite{https://doi.org/10.48550/arxiv.1904.08695}.

\begin{lemma}[Kontinen, Yang]\label{yang.kontinen.construction}
	Let $\Gamma$ be a finitely satisfiable set of formulas of  $\mathcal{L}$. Then there is a structure $\M$ in the expanded vocabulary $\tau_\Gamma$ such that $\M\models \chi_\phi(R_\phi)$ for all $\phi\in\Gamma$ and, additionally, $\M\models\Delta_\Gamma$.
\end{lemma}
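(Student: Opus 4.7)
The plan is to derive the lemma from the ordinary first-order compactness theorem by encoding everything into an FO theory over an extended vocabulary. Writing each translation in the form $\chi_\phi(R_\phi) = \exists R^1_\phi \cdots \exists R^{n_\phi-1}_\phi\, \alpha_\phi$ with $\alpha_\phi$ first-order, I treat the second-order witnesses $R^i_\phi$ as new predicate symbols and consider the first-order theory
\[
    T \;=\; \{\alpha_\phi \mid \phi \in \Gamma\} \,\cup\, \Delta_\Gamma
\]
in the vocabulary $\tau_\Gamma \cup \{R^i_\phi \mid \phi \in \Gamma,\ 1 \le i < n_\phi\}$. Any $\tau_\Gamma$-reduct of a model of $T$ will then satisfy $\chi_\phi(R_\phi)$ for all $\phi \in \Gamma$ (with the $R^i_\phi$'s of the expanded model serving as existential witnesses) and will satisfy $\Delta_\Gamma$ directly. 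So it suffices to prove $T$ is satisfiable, and by first-order compactness it suffices to show $T$ is finitely satisfiable.

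Take a finite $T_0 \subseteq T$. It mentions only finitely many formulas $\phi_1,\dots,\phi_k \in \Gamma$ and finitely many finite index sets $I \subseteq \kappa$ (through occurrences of $R_{\phi_j}$ and $S_I$). Set $\Gamma_0 = \{\phi_1,\dots,\phi_k\}$ and let $K \subseteq \kappa$ be the finite union of $I_{\phi_1},\dots,I_{\phi_k}$ together with all $I$'s appearing in sentences of $T_0$. By the finite satisfiability hypothesis, pick a $\tau$-structure $\M_0$ and a nonempty team $X_0$ with $\M_0 \models_{X_0} \Gamma_0$. Using locality and iterated duplication by $M_0$, extend the domain of $X_0$ so that it contains $x_i$ for every $i \in K$ without disturbing the satisfaction of $\Gamma_0$: each duplication step leaves the restriction to the old domain unchanged, so locality preserves truth of every $\phi \in \Gamma_0$.

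Now expand $\M_0$ to a $(\tau_\Gamma \cup \{R^i_\phi\})$-structure $\M_0^*$ by setting $R_\phi^{\M_0^*} := X_0[(x_i)_{i \in I_\phi}]$ for $\phi \in \Gamma_0$, $S_I^{\M_0^*} := X_0[(x_i)_{i \in I}]$ for every finite $I \subseteq K$ mentioned in $T_0$, and choosing any interpretations elsewhere. For each $\phi \in \Gamma_0$, use \cref{translation} to pick interpretations of $R^1_\phi,\dots,R^{n_\phi-1}_\phi$ witnessing $(\M_0, X_0[(x_i)_{i\in I_\phi}]) \models \chi_\phi(R_\phi)$, so that $\alpha_\phi$ holds in $\M_0^*$. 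A direct check against \cref{intuition} confirms $\M_0^* \models T_0$: the $\alpha_\phi$'s hold by choice of the $R^i_\phi$, the sentences $\exists \vec v\, R_\phi(\vec v)$ hold because $X_0$ is nonempty, the equivalences $R_\phi \leftrightarrow S_{I_\phi}$ hold because both sides are defined as $X_0[(x_i)_{i\in I_\phi}]$, and the projection axioms linking $S_I$ with $S_J$ (for $I\subseteq J$) hold because projecting $X_0[(x_i)_{i\in J}]$ onto the coordinates indexed by $I$ yields exactly $X_0[(x_i)_{i\in I}]$.

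The main subtlety is the middle step: one must make sure the team $X_0$ witnessing finite satisfiability can be adjusted to carry all the variables needed to interpret every $S_I$ and $R_\phi$ mentioned in $T_0$, and that the ensuing interpretations cohere so that the projection axioms of $\Delta_\Gamma$ come out true. Both difficulties are resolved by the same device: interpret \emph{all} the relevant $R_\phi$'s and $S_I$'s as projections of one common team $X_0$ whose domain has been uniformly enlarged via duplication. Locality makes the enlargement harmless, and the common origin of the interpretations automatically enforces the coherence axioms of $\Delta_\Gamma$.
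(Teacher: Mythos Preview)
Your proof is correct and follows essentially the same route as the paper: Skolemize the $\eso$-translations to get a first-order theory $\{\alpha_\phi \mid \phi\in\Gamma\}\cup\Delta_\Gamma$, verify finite satisfiability by interpreting all of $R_\phi$ and $S_I$ as projections of a single team witnessing a finite fragment of $\Gamma$, then invoke first-order compactness and take the $\tau_\Gamma$-reduct. The only cosmetic difference is that the paper extends the witnessing team's domain to all of $\{x_i\mid i<\kappa\}$ in one step via locality (so that the full $\Delta_\Gamma$ holds in the expanded model), whereas you enlarge the domain only to the finite set $K$ via iterated duplication and verify just $T_0$; both devices are fine and do the same job.
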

\begin{proof}
	Let  $\Gamma'= \{\alpha_\phi \mid \phi \in \Gamma \}$, where $\alpha_\phi$ is as in \cref{notation}. Let $\Gamma'_0$ be a finite subset of $\Gamma'$. Then $\Gamma_0 \coloneqq \{ \phi \mid \alpha_\phi \in \Gamma'_0  \}$ is a finite subset of $\Gamma$, whence by assumption there are a model $\mathcal{N}$ and a non-empty team $Y$ such that $\mathcal{N}\models_Y \Gamma_0$. By locality we may assume that $\dom(Y)=\{x_i \mid i<\kappa\}$. Then, since  $\mathcal{L}$ satisfies a version of Theorem~\ref{translation}, $(\N,R_\phi^\N)\models\chi_\phi(R_\phi)$ for every $\phi\in \Gamma_0$, where $R_\phi^\N = Y[\Fv(\phi)]$. Now, for every $\phi\in\Gamma_0$, there is a tuple $\mathcal{R}_\phi^\N \coloneqq ((R^0_\phi)^\N,\dots,(R^{n_\phi}_\phi)^\N)$ of relations such that $(\N, R_\phi^\N,\mathcal{R}_\phi^\N)\models \alpha_\phi$. Then let
	\[
	    \N'= (\N, (R_\phi^{\N})_{\phi\in\Gamma}, (\mathcal{R}_\psi^\N)_{\psi\in\Gamma_0}, (S_I^\N)_{I\subseteq \kappa, |I|<\omega}),
	\]
	where $S_I^\N= Y[(x_i)_{i\in I}]$. It follows that $ \N'\models \alpha_\phi$ for all $\phi \in \Gamma_0$. By Lemma~\ref{intuition}, it is also clear that $\N'\models\Delta_\Gamma$.

	Thus we have that $\N'\models \Gamma'_0\cup\Delta_\Gamma$. Therefore, since $\Gamma'_0$ was an arbitrary finite subset of $\Gamma'$, it follows by the compactness theorem of first-order logic that there is a model $\M'\models \Gamma'\cup\Delta_\Gamma$. Finally, let $\M$ be the $\tau_\Gamma$-reduct of $\M'$. Then $\M\models \Delta_\Gamma$ and, since for all $\phi\in\Gamma$ we have $\mathcal{M}'\models \alpha_\phi$, it also follows that  $\mathcal{M}\models \chi_\phi(R_\phi)$.
\end{proof}

We now move to the second step of the compactness proof. In  \cite{https://doi.org/10.48550/arxiv.1904.08695}, Kontinen and Yang achieve it by using the fact that $\M\models\Delta_\Gamma$ to merge all the relations $R_\phi^\M$ into a unique team. However, when assuming the uncountability of $\Fv(\Gamma)$ one cannot proceed in the same fashion, and first needs to consider a suitably saturated elementary extension $\M'$ of $\M$. It can then be shown that, in such extension $\M'$, each $R_\phi^\M$ can be merged together to obtain the team that we need. 

We recall some definitions from model theory.
\begin{definition}
    Let $\M$ be a $\tau$-structure.
    \begin{enumerate}
        \item Let $A\subseteq\M$. An \emph{$n$-type} over $A$ in variables $v_0,\dots,v_{n-1}$ is any nonempty set $p$ of first-order $\tau\cup\{a \mid a\in A\}$-formulas $\phi(v_0,\dots,v_{n-1})$, where an element $a\in A$ is thought of as a constant symbol whose interpretation in $\M$ is the element itself.
        
        \item We say that a type $p(\vec{v})$ is \emph{realised} in $\M$ if there is $\vec{b}\in\M^n$ such that $\M\models\phi(\vec{b})$ for all $\phi(\vec{v})\in p$.
        
        \item We say that a type $p$ is \emph{consistent} if each of its finite subtypes is realised in $\M$.
        
        \item Let $\kappa$ be a cardinal number. We say that a model $\M$ is \emph{$\kappa$-saturated} if for every $A\subseteq\M$ such that $|A|<\kappa$, every consistent type $p$ over $A$ is realised in $\M$.
        
        \item A $\tau$-structure $\N$ is an \emph{elementary extension} of $\M$ if $\M$ is a substructure of $\N$ and for all first-order formulas $\phi(\vec{v})$ and $\vec{a}\in\M$, we have
        \[
            \M\models\phi(\vec{a}) \iff \N\models\phi(\vec{a}).
        \]
    \end{enumerate}
\end{definition}

For the proof of the following fact, see e.g.~\cite{Chang-Keisler}.
\begin{fact}
    Let $\M$ be a $\tau$-structure, and let $\kappa$ be a regular cardinal such that $\kappa\geq|\tau|$. Then $\M$ has a $\kappa$-saturated elementary extension.
\end{fact}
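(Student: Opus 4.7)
The plan is to construct a $\kappa$-saturated elementary extension of $\M$ as the union of a suitable elementary chain of length $\kappa$, where at each successor stage one passes to an extension that realizes every consistent type with fewer than $\kappa$ parameters.

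The key auxiliary step is the following one-step claim: for every $\tau$-structure $\N$, there is an elementary extension $\N^*$ of $\N$ realizing every consistent type $p(\vec{v})$ over every subset $A \subseteq \dom(\N)$ with $|A| < \kappa$. To establish this, expand the signature by a fresh tuple $\vec{c}_p$ of constants for each such type $p$ and form the first-order theory
\[
    T = \mathrm{Diag}_{\mathrm{el}}(\N) \cup \bigcup_{p} p(\vec{c}_p),
\]
where $\mathrm{Diag}_{\mathrm{el}}(\N)$ is the elementary diagram of $\N$ and $p(\vec{c}_p)$ denotes the formulas obtained by substituting $\vec{c}_p$ for $\vec{v}$ throughout $p$. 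Any finite $T_0 \subseteq T$ mentions only finitely many types $p_1, \ldots, p_n$ and only finitely many formulas from each; since each $p_i$ is consistent over $\N$, by the compactness theorem of first-order logic the constants $\vec{c}_{p_i}$ can be interpreted inside $\N$ so as to satisfy all of $T_0$. Hence $T$ is consistent, and the $\tau$-reduct of any of its models yields the desired $\N^*$.

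Next, I would define an elementary chain $(\M_\alpha)_{\alpha \leq \kappa}$ by transfinite recursion: set $\M_0 = \M$; at successor stages let $\M_{\alpha+1} = (\M_\alpha)^*$ via the one-step claim; and at limits let $\M_\lambda = \bigcup_{\alpha < \lambda}\M_\alpha$, invoking the Tarski--Vaught elementary chain theorem to ensure that elementarity is preserved throughout.

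Finally, I would verify that $\M_\kappa$ is $\kappa$-saturated. Given $A \subseteq \dom(\M_\kappa)$ with $|A| < \kappa$ and any consistent type $p$ over $A$, the regularity of $\kappa$ combined with $\M_\kappa = \bigcup_{\alpha < \kappa}\M_\alpha$ yields $A \subseteq \dom(\M_\alpha)$ for some $\alpha < \kappa$; hence $p$ is already realized in $\M_{\alpha+1}$ by construction, and therefore in $\M_\kappa$ by elementarity. The main delicate point is precisely this use of regularity at the final step to pull any small parameter set down to some level of the chain; the hypothesis $\kappa \geq |\tau|$ is what keeps the family of types at each stage set-sized, so that the recursive construction remains well-defined.
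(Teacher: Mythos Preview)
The paper does not give its own proof of this fact; it simply cites Chang--Keisler. Your elementary-chain argument is exactly the standard proof one finds there, and it is correct.

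One small remark: your closing sentence misidentifies the role of the hypothesis $\kappa \geq |\tau|$. The family of consistent types over a fixed parameter set $A$ is always a set (it is bounded by $\Pow$ of the set of $\tau\cup A$-formulas), so nothing about $\kappa \geq |\tau|$ is needed to keep the construction well-defined. In fact the chain argument you wrote goes through for any regular $\kappa$, and the hypothesis $\kappa \geq |\tau|$ is not used anywhere in your proof; the paper presumably includes it only because it is harmless and convenient for the intended application. The genuinely essential hypothesis, as you correctly note, is the regularity of $\kappa$, which is what lets you absorb any parameter set of size $<\kappa$ into some $\M_\alpha$.
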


\begin{theorem}[Compactness]
	Every  finitely satisfiable set $\Gamma$ of formulas of  $\mathcal{L}$ is satisfiable.
\end{theorem}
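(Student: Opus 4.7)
The plan is to combine Lemma~\ref{yang.kontinen.construction} with a saturation argument to merge the various relations $R_\phi^\M$ into a single team. First, I would apply that lemma to obtain a $\tau_\Gamma$-structure $\M$ with $\M\models\chi_\phi(R_\phi)$ for every $\phi\in\Gamma$ and $\M\models\Delta_\Gamma$; by tracking the construction in its proof, I may also arrange that $\M$ is expanded by the witnessing relations $R_\phi^i$ for the $\eso$-existentials in each $\chi_\phi$, so that $\M\models\alpha_\phi$ is a first-order statement in the richer vocabulary $\tau^*$. I would then pass to a $\kappa^+$-saturated elementary $\tau^*$-extension $\N$ of $\M$, which exists by the cited fact. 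By elementarity, $\N\models\Delta_\Gamma$ and the $R_\phi^{i,\N}$ still witness $\chi_\phi(R_\phi)$, so $\N\models\chi_\phi(R_\phi)$ for every $\phi\in\Gamma$.

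Next, writing $D=\{x_i \mid i<\kappa\}$, I would define the team
\[
    X \coloneqq \{s\colon D\to\N \mid (s(x_i))_{i\in I}\in S_I^\N \text{ for every finite }I\subseteq\kappa\}.
\]
The central claim to establish is that $X[(x_i)_{i\in I}] = S_I^\N$ for every finite $I\subseteq\kappa$; the inclusion $\subseteq$ is immediate from the definition of $X$. For the converse, I would fix $(a_i)_{i\in I}\in S_I^\N$ and extend it to a full assignment by transfinite recursion along an enumeration $(j_\alpha)_{\alpha<\lambda}$ of $\kappa\setminus I$. At stage $\alpha$, having chosen values $b_\beta=s(x_{j_\beta})$ for $\beta<\alpha$ compatibly with every finite $S_J^\N$-constraint indexed inside $I\cup\{j_\beta \mid \beta<\alpha\}$, I would realize in $\N$ a $1$-type $p_\alpha(v)$ over $\{a_i\}_{i\in I}\cup\{b_\beta\}_{\beta<\alpha}$ expressing that placing $v$ in coordinate $j_\alpha$ preserves every relevant finite constraint. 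Since $p_\alpha$ has fewer than $\kappa$ parameters, $\kappa^+$-saturation realizes it once consistency is established.

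The hardest step will be verifying consistency of $p_\alpha(v)$: given finitely many index sets $J_1,\dots,J_n$ occurring in the formulas of $p_\alpha$, I would set $J^* = \bigcup_k J_k$ and use the coherence clauses of $\Delta_\Gamma$ (which guarantee $S_{J'} = S_J\restriction J'$ whenever $J'\subseteq J$ are both finite) to lift the inductive invariant to a single tuple in $S_{J^*}^\N$; projecting this tuple back furnishes one value of $v$ witnessing every $J_k$-constraint simultaneously. Once the central claim holds, for every $\phi\in\Gamma$ we obtain $X[\Fv(\phi)] = S_{I_\phi}^\N = R_\phi^\N$ by $\Delta_\Gamma$, so Theorem~\ref{translation} together with $\N\models\chi_\phi(R_\phi)$ yields $\N\restriction\tau\models_X\phi$. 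Finally, nonemptiness of $X$ follows by applying the same extension argument to any element of a nonempty $R_\phi^\N$ (assuming $\Gamma\neq\emptyset$, the empty case being trivial).
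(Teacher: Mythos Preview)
Your proposal is correct and follows essentially the same route as the paper: expand by Skolem witnesses for the $\eso$-quantifiers, pass to a sufficiently saturated elementary extension, define the team via the $S_I$-relations, and prove the key projection identity by a transfinite recursion realising $1$-types whose consistency is verified by amalgamating the finitely many index sets and invoking the coherence clauses of $\Delta_\Gamma$. The only cosmetic differences are your choice of $\kappa^+$ rather than an unspecified regular $\xi\geq\kappa$ (harmless, though one should take $\xi$ large enough to accommodate the vocabulary when citing the saturation fact) and your enumeration of $\kappa\setminus I$ rather than the paper's reduction to $I$ being an initial segment.
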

\begin{proof}
	Let $x_i$, $i<\kappa$, enumerate the free variables of $\Gamma$. By \cref{yang.kontinen.construction}, we obtain a structure $\M_0$ in the vocabulary $\tau_\Gamma$ such that $\M_0\models\chi_\phi$ for all $\phi\in\Gamma$ and $\M_0\models\Delta_\Gamma$. Let $\N_0$ be the expansion of $\M_0$ with interpretation for $\mathcal{R}_\phi^{\N_0}= ((R^0_\phi)^{\N_0},\dots,(R^{n_\phi}_\phi)^{\N_0})$, for every $\phi \in \Gamma$. Then $\N_0\models\alpha_\phi$ for every $\phi \in \Gamma$.

    Let $\xi\geq\kappa$ be a regular cardinal, and let $\N$ be a $\xi$-saturated elementary extension of $\N_0$. By elementarity, $\N\models\Delta_\Gamma$ and $\N\models\alpha_\phi$ for all $\phi\in\Gamma$. Let $\M$ be the $\tau_\Gamma$-reduct of $\N$. Then we have $\M\models\Delta_\Gamma$ and $\M\models\chi_\phi(R_\phi)$ for all $\phi\in\Gamma$, and $\M$ is $\xi$-saturated.
	
    To show that $\Gamma$ is satisfiable, we need to merge the several finitary relations $R_\phi^\M$ into a unique infinitary relation. We define the infinitary relation
	\[
	   \S:= \{ (a_i)_{i<\kappa}\in\M^\kappa \mid \text{$\M\models S_I((a_i)_{i\in I})$ for every finite $I\subseteq\kappa$} \}.
	\]
	We then let $Y = \{s\colon\{x_i \mid i<\kappa\}\to\M \mid (s(x_i))_{i<\kappa}\in\S\}$. For all $\phi\in\Gamma$ and finite $I\subseteq\kappa$, we let $X_\phi^\M$ and $Y_I^\M$ be as in \cref{intuition}. By the version of \cref{translation} for the logic $\mathcal{L}$, $\M\models_{X_\phi^\M}\phi$ for all $\phi\in \Gamma$. Since by \cref{intuition} we have $Y_{I_\phi}^\M = X_\phi^\M$ and $X_\phi^\M\neq\emptyset$ for any $\phi\in\Gamma$, we obtain
	\begin{equation}\label{projections.are.as.desired}
	    \M\models_{Y_{I_\phi}^\M}\phi \quad\text{and}\quad Y_{I_\phi}^\M\neq\emptyset.
	\end{equation}
	Now it suffices to show that for any finite $I\subseteq\kappa$, we have $Y\restriction\{x_i \mid i\in I\} = Y_I^\M$, as then
	\begin{itemize}
	    \item in particular, $Y\restriction\Fv(\phi) = Y_{I_\phi}^\M$ for all $\phi\in\Gamma$, whence, by~\eqref{projections.are.as.desired} and locality, $\M\models_Y\phi$ for all $\phi\in\Gamma$, i.e. $\M\models_Y\Gamma$,
	    \item $Y$ is nonempty, as otherwise the restrictions $Y_{I_\phi}^\M$ would be empty, which by~\eqref{projections.are.as.desired} they are not.
	\end{itemize}
	We proceed to prove that $Y\restriction\{x_i \mid i\in I\} = Y_I^\M$ for all finite $I\subseteq\kappa$. For this, fix a finite $I\subseteq\kappa$.
	\begin{itemize}
	    \item[$(\subseteq)$] Let $s\in Y\restriction\{x_i \mid i\in I\}$. Then there is $s'\in Y$ such that $s = s'\restriction\{x_i \mid i\in I\}$. By the definition of $Y$, $(s'(x_i))_{i<\kappa}\in\S$. Now by the definition of $\S$, we have $\M\models S_I((s'(x_i))_{i\in I})$, i.e. $(s'(x_i))_{i\in I}\in S_I^\M$. But as $s(x_i)=s'(x_i)$ for all $i\in I$, we have $(s(x_i))_{i\in I}\in S_I^\M$. By the definition of $Y_I^\M$, this means that $s\in Y_I^\M$.
	    
	    \item[$(\supseteq)$] We fix $s\in Y_I^\M$ and let $a_i = s(x_i)$ for all $i\in I$. To show that $s\in Y\restriction\{x_i \mid i\in I\}$, it is enough to find $b_i$, $i<\kappa$, such that $(b_i)_{i<\kappa}\in\S$ and $b_i = a_i$ for $i\in I$. To simplify notation, we assume without loss of generality that $I$ is a (finite) initial segment (and thus an element) of $\kappa$, and we denote $\vec{a} = (a_i)_{i\in I}$. We define $b_i$, $I\leq i<\kappa$, recursively as follows. When $b_j$, $j<i$, have been defined, define the $1$-type
	    \[
	        p_i \coloneqq \{S_{I\cup\{j_0,\dots,j_{n-1},i\}}(\vec{a},b_{j_0},\dots,b_{j_{n-1}},v_0) \mid n<\omega,\ I\leq j_0<\dots<j_{n-1}<i \}
	    \]
	    over the set of parameters $\{b_j \mid j<i\}$, and let $b_i$ be an element of $\M$ that realises $p_i$.

	    We show by induction on $i$ that the element $b_i$ that realises $p_i$ always exists. If $i=I$, then $p_i = \{S_{I\cup\{i\}}(\vec{a},v_0)\}$. As $\M\models\Delta_\Gamma$, in particular
	    \[
	        \M\models\forall(x_j)_{j\in I}(S_I((x_j)_{j\in I}) \leftrightarrow \exists x_i S_{I\cup\{i\}}((x_j)_{j\in I\cup\{i\}})),
	    \]
	    and since $\M\models S_I(\vec{a})$, we then obtain $\M\models\exists x_i S_{I\cup\{i\}}(\vec{a},x_i)$. Hence we find $b_i\in\M$ such that $b_i\models p_i$.
	    
	    Then suppose that $i>I$ and by the induction hypothesis $b_j\models p_j$ for $I\leq j<i$. Then, as the set $\{b_j \mid j<i\}$ of parameters has power $|i|<\kappa\leq\xi$ and $\M$ is $\xi$-saturated, $p_i$ is realised in $\M$ as long as it is consistent. So left is to show that $p_i$ is consistent.
	    
	    Let $q\subseteq p_i$ be finite. Now $q = \{S_{I\cup J_m\cup\{i\}}(\vec{a},(b_j)_{j\in J_m},v_0) \mid m<k\}$ for some finite $J_0,\dots,J_{k-1}\subseteq i\setminus I$. Let $J=\bigcup_{m<k}J_m$. If $J$ is empty then $q\subseteq p_I$ and $q$ is realised by $b_I$, so we may assume $J\neq\emptyset$. Let $j_0,\dots,j_l$ enumerate $J$ in increasing order. Now, as $J\subseteq i$, we have $j_l<i$, and hence by the induction hypothesis, $b_{j_l}\models p_{j_l}$. Notice that $S_{I\cup J}(\vec{a},b_{j_0},\dots,b_{j_{l-1}},v_0)\in p_{j_l}$, and hence $\M\models S_{I\cup J}(\vec{a},b_{j_0},\dots,b_{j_l})$. Since $\M\models\Delta_\Gamma$, we have
	    \[
	        \M\models\forall(x_j)_{j\in I\cup J}[S_{I\cup J}((x_j)_{j\in I},(x_j)_{j\in J}) \leftrightarrow \exists x_i S_{I\cup J\cup\{i\}}((x_j)_{j\in I},(x_j)_{j\in J},x_i)],
	    \]
	    whence $\M\models\exists x_i S_{I\cup J\cup\{i\}}(\vec{a},b_{j_0},\dots,b_{j_l},x_i)$. It then follows that,  for some $c\in\M$, $\M\models S_{I\cup J\cup\{i\}}(\vec{a},b_{j_0},\dots,b_{j_l},c)$. Then, for any $m<k$ we have
	    \[
	        \M\models \exists(x_j)_{j\in J\setminus J_m}S_{I\cup J\cup\{i\}}(\vec{a},(x_j)_{j\in J},c)[b_j/x_j]_{j\in J_m}.
	    \]
	     Moreover, since $\M\models\Delta_\Gamma$, in particular for any $m<k$, $\M$ satisfies the sentence
	    \begin{align*}
	       \forall(x_j)_{j\in I\cup J_m\cup\{i\}}  & \; [S_{I\cup J_m\cup \{i\}}((x_j)_{j\in I},(x_j)_{j\in J_m},x_i) \\ & \; \leftrightarrow \exists(x_j)_{j\in J\setminus J_m}S_{I\cup J\cup\{i\}}((x_j)_{j\in I},(x_j)_{j\in J},x_i)].
	    \end{align*}
	    We then obtain $\M\models S_{I\cup J_m\cup\{i\}}(\vec{a},(b_j)_{j\in J_m},c)$, showing that $c\models q$. This finishes the proof. \qedhere
	\end{itemize}
\end{proof}

\section{Conclusion and Future Directions}

In this article we provided two proofs of the compactness theorem  for several extensions of first-order logic over team semantics, including in particular independence logic and its fragments.  In fact, although compactness for sets of sentences of (in)dependence logic had already been  studied in the literature, compactness for sets of formulas was considered only recently by \textcite{https://doi.org/10.48550/arxiv.1904.08695}, who proved compactness for sets of formulas with countably many variables. In this work we strengthened this result and showed that several logics over team semantics satisfy the compactness theorem with respect to arbitrary sets of formulas. In \cref{compactness-by-ultraproducts} we built upon  Lück's previous work  \cite{luck2020team} and we proved compactness by providing a suitable version of  Łoś' Theorem. On the other hand, in \cref{compactness-by-translation}, we used saturated models to generalize Kontinen's and Yang's proof to arbitrary set of formulas. 

We conclude by noticing that,  while team semantics has been largely studied from the point of view of finite model theory, there has not been an extensive study of the (infinite) model theory of teams. Given the central role played by compactness in elementary model theory, it is natural to inquire how much of model theory could be successfully replicated in the context of team semantics. In particular, we believe that one possibly fruitful direction could be to define a suitable notion of type (of a team, instead of an element or tuple) and prove that there is a (notion of) monster model. One could also search for a natural definition of Galois types for teams and adapt the framework of abstract elementary classes to the setting of team semantics. At the same time, it seems also important to provide examples of theories which could be studied under the light of team semantics. We leave these and other questions to future research.

\printbibliography

\end{document}